\newcommand{\msi}{\mathscr{I}}\newcommand{\msl}{\mathscr{L}}
\newcommand{\mst}{\mathscr{T}}
    \newcommand{\BC}{{\mathbb {C}}}
    \newcommand{\BQ}{{\mathbb {Q}}}
     \newcommand{\BZ}{{\mathbb {Z}}}
    \newcommand{\CE}{{\mathcal {E}}}
     \newcommand{\CL}{{\mathcal {L}}}
    \newcommand{\CO}{{\mathcal {O}}}
    \newcommand{\fa}{{\mathfrak{a}}} \newcommand{\fb}{{\mathfrak{b}}}
    \newcommand{\fg}{{\mathfrak{g}}}
     \newcommand{\fp}{{\mathfrak{p}}}
    \newcommand{\fq}{{\mathfrak{q}}} \newcommand{\fr}{{\mathfrak{r}}}
      \newcommand{\fB}{{\mathfrak{B}}}
    \newcommand{\fC}{{\mathfrak{C}}} \newcommand{\fD}{{\mathfrak{D}}}
    \newcommand{\fG}{{\mathfrak{G}}} 
     \newcommand{\fJ}{{\mathfrak{J}}}
     \newcommand{\fL}{{\mathfrak{L}}}
     \newcommand{\fP}{{\mathfrak{P}}}
    \newcommand{\fQ}{{\mathfrak{Q}}} \newcommand{\fR}{{\mathfrak{R}}}
    \newcommand{\fS}{{\mathfrak{S}}} 
    \newcommand{\fW}{{\mathfrak{W}}}
    \newcommand{\End}{{\mathrm{End}}}
    \newcommand{\Gal}{{\mathrm{Gal}}}
    \newcommand{\ord}{{\mathrm{ord}}}
    \renewcommand{\mod}{\ \mathrm{mod}\ }\renewcommand{\Re}{{\mathrm{Re}}}
    \font\cyr=wncyr10
    \newcommand{\Sha}{\hbox{\cyr X}}
    \newcommand{\ov}{\overline}
    \newcommand{\ra}{\rightarrow}
    \theoremstyle{plain}
    \newtheorem{thm}{Theorem}[section] \newtheorem{cor}[thm]{Corollary}
    \newtheorem{lem}[thm]{Lemma}  \newtheorem{prop}[thm]{Proposition}
\theoremstyle{remark} 
\theoremstyle{remark} 
\theoremstyle{remark} 
    \numberwithin{equation}{section}
\begin{document}

\title{Non-vanishing theorems for central $L$-values of some elliptic curves with complex multiplication II.}

\author{John Coates, Yongxiong Li}

\begin{abstract} Let $q$ be any prime $\equiv 7 \mod 16$, $K = \mathbb{Q}(\sqrt{-q})$, and let $H$ be the Hilbert class field of $K$.  Let $A/H$ be the Gross elliptic curve defined over $H$ with complex multiplication by the ring of integers of $K$. We prove the existence
of a large explicit infinite family of quadratic twists of $A$ whose complex $L$-series does not vanish at $s=1$. This non-vanishing
theorem is completely new when $q > 7$. Its proof depends crucially on the results established in our earlier paper for the Iwasawa theory at the prime
$p=2$ of the abelian variety $B/K$, which is the restriction of scalars from $H$ to $K$ of the elliptic curve $A$.
\end{abstract}
\maketitle
\section{Introduction}

 Let $K=\BQ(\sqrt{-q})$, where $q$ is any  prime with $q \equiv 7 \mod 8$. We fix an embedding of $K$ into the field $\BC$ of complex numbers. Let $\CO_K$ be the ring of integers of $K$, and write $h$ for the class number of $K$. Let $H=K(j(\CO_K))$ be the Hilbert class field of $K$, where $j$ is the classical elliptic modular function. Gross \cite{Gross78} has shown that there exists a unique elliptic curve $A$ defined over $\BQ(j(\CO_K))$, with complex multiplication by $\CO_K$, minimal discriminant $(-q^3)$, and which is a $\BQ$-curve in the sense that it is isogenous over $H$ to all of its conjugates.  An explicit  equation for $A$ over $\BQ(j(\CO_K))$ is given by
\begin{equation}\label{mg}
y^2 = x^3 + 2^{-4}3^{-1}mqx - 2^{-5}3^{-3}rq^2,
\end{equation}
where $m^3 = j(\CO_K)$, and $r^2 = ((12)^3 - j(\CO_K))/q$ with $r > 0$ (see \cite{Gross82}). Let $L(A/H, s)$ be the complex $L$-series of $A/H$. When $q \equiv 7 \mod 16$, we used Iwasawa theory in \cite{CL} to give a new proof of an old theorem of Rohrlich \cite{Ro1} asserting that $L(A/H, 1) \neq 0$. The aim of the present paper is to prove a generalization of  this theorem to a large infinite class of quadratic twists of $A/H$. Let $\fR$ denote the set of all square free positive integers $R$ of the form $R = r_1...r_k$ , where $k\geq 0$,  and $r_1, \dots, r_k$ are distinct primes such that (i) $r_i \equiv 1 \mod 4$, and  (ii) $r_i$ is inert in $K$, for $i=1,..., k$. For $R \neq 1 \in \fR$, let $A^{(R)}$ be the twist of $A$ by the quadratic extension $H(\sqrt{R})/H$. Note that such an extension is non-trivial since $K$ has odd class number. We write $L(A^{(R)}/H, s)$ for the complex $L$-series of $A^{(R)}/H$. By Deuring's theorem, $L(A^{(R)}/H, s)$ is a product of Hecke $L$-series with Grossencharacter. We shall prove the following theorem.

\begin{thm}\label{main}
Assume that  $q\equiv 7\mod 16$. Then, for all $R \in \fR$, we have $L(A^{(R)}/H, 1) \neq 0$.
\end{thm}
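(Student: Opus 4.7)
The plan is to reduce the theorem to explicit 2-descent Selmer group computations for the restriction of scalars $B^{(R)}/K$ of $A^{(R)}$ from $H$ down to $K$. Since $L(A^{(R)}/H,s)=L(B^{(R)}/K,s)$, it is enough to show $L(B^{(R)}/K,1)\neq 0$ for every $R\in\fR$. Because $q\equiv 7\mod 8$, the prime $2$ splits in $K$, and the $\CO_K$-multiplication on $B^{(R)}$ produces a 2-isogeny $\phi : B^{(R)} \to {B'}^{(R)}$ defined over $K$ together with its dual $\widehat\phi$, exactly as in the framework used for $B/K$ in \cite{CL}. By the Iwasawa-theoretic results at $p=2$ and the associated BSD-type formula established there, the desired non-vanishing is equivalent to an explicit upper bound on the $\BF_2$-dimensions of the $\phi$- and $\widehat\phi$-Selmer groups of $B^{(R)}/K$.

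I would prove this bound by induction on the number $k$ of prime factors of $R$, with the base case $k=0$ (i.e.\ $R=1$) handled exactly by \cite{CL}. For the inductive step I pick a prime $r\equiv 1\mod 4$ inert in $K$ and coprime to $R$, and compare the $\phi$-Selmer groups of $B^{(R)}/K$ and $B^{(Rr)}/K$. The twist character $\chi_r$ attached to $K(\sqrt{r})/K$ is unramified away from the unique prime $\fr$ of $K$ above $r$, so it can only affect the local descent conditions at $\fr$, at the two primes above $2$, and at the prime above $q$. The condition $r\equiv 1\mod 4$ makes the residue field at $\fr$ of order $r^2\equiv 1\mod 8$, so the local $\phi$-Kummer condition at $\fr$ is preserved under the twist; at the primes of $K$ above $2q$ the character $\chi_r$ is unramified and one verifies directly that the local conditions are again stable.

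Assembling these local comparisons yields a natural inclusion $S^{(\phi)}(B^{(Rr)}/K) \hookrightarrow S^{(\phi)}(B^{(R)}/K)$, and similarly for $\widehat\phi$. The base-case bound of \cite{CL} therefore propagates through the induction to every $R\in\fR$. Since the resulting upper bound exactly matches the 2-adic valuation of the algebraic part of $L(B^{(R)}/K,1)$ predicted by the BSD-type formula of \cite{CL}, the $L$-value cannot vanish, proving the theorem.

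The hard part will be the local analysis at the two primes of $K$ above $2$, which is genuinely delicate because the descent itself is via a 2-isogeny and the CM structure must be tracked through the twist by $\chi_r$. The hypotheses $r\equiv 1\mod 4$ and $r$ inert in $K$, encoded in the definition of $\fR$, are designed precisely so that the image of the local $\phi$-Kummer map is unchanged at every prime of $K$ dividing $2qR$; establishing this stability prime by prime, and in particular controlling the contribution from the primes above $2$ where both $\phi$ and the twist interact nontrivially, is the main technical obstacle, after which the inductive Selmer bound combined with the formalism of \cite{CL} yields the very large explicit family of non-vanishing twists asserted in Theorem \ref{main}.
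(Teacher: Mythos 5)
Your proposal rests on a step that does not hold: you assert that, thanks to \cite{CL}, the non-vanishing of $L(B^{(R)}/K,1)$ is \emph{equivalent} to an upper bound on the $\BF_2$-dimensions of the $\phi$- and $\widehat\phi$-Selmer groups of $B^{(R)}/K$. But the Iwasawa-theoretic results of \cite{CL} (the main conjecture over $F_\infty=K(B_{\fP^\infty})$ and the associated special value formula) are established for the \emph{untwisted} abelian variety $B/K$ only, not for the twists $B^{(R)}$. Bounding the $\phi$-Selmer group of $B^{(R)}/K$ by an inductive local comparison, as you describe, shows at best that $B^{(R)}(K)$ and $\Sha(B^{(R)})[\phi]$ are small; the implication ``small Selmer group $\Rightarrow L(1)\neq 0$'' is precisely the hard direction of a main conjecture / BSD-type formula for $B^{(R)}$, which is not available. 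Indeed, the introduction of the paper says this explicitly: a classical $2$-descent on $A^{(R)}/H$ shows $A^{(R)}(H)$ is finite for all $R\in\fR$, but this ``tells us nothing about the finiteness of the $2$-primary subgroup of the Tate--Shafarevich group $\Sha(A^{(R)})$ \dots and thus does not allow any hope of proving a result like Theorem \ref{main} easily from a suitable main conjecture.'' Your proposal is essentially that excluded route, with the descent pushed from $A^{(R)}/H$ down to $B^{(R)}/K$; the obstruction is the same.

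The paper's actual proof proceeds in the opposite direction: it is an analytic induction on the number $k$ of prime factors of $R$ in the style of C.~Zhao, working directly with the $L$-values rather than with Selmer groups. The key-identity (Theorem \ref{key-identity}) expresses $\sum_{d\mid R}L_R(\ov\phi_d,1)/\Omega_\infty$ as $2^k$ times a $w$-adic limit of traces of weight-one Eisenstein numbers $\Psi_{\fa,R}$, and Proposition \ref{inte-2} shows these are $2$-integral. The main conjecture from \cite{CL} enters only once, in Proposition \ref{4.4n}, to pin down the exact $\fP$-adic valuation of the base-case value $L(\ov\phi,1)/\Omega_\infty$, using that $\mu_A$ is a unit and that $\ord_\fP(\phi(\fp^*)-1)=1$ when $q\equiv 7\bmod 16$. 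The inductive step then needs the subtle Theorem \ref{4.7}, that $\msl(d)/\msl\in T$ (not merely $HT$), so that after dividing through by $\msl$ all the intermediate terms live in $T_\fP$ with residue field $\BF_2$ and their sum can be shown to vanish to higher order. None of this machinery appears in your proposal, and the local $\phi$-Kummer analysis you outline, even if carried out, would not close the gap from ``Selmer bounded'' to ``$L$-value non-zero''.
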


\noindent Since the torsion subgroup of $A^{(R)}(H)$ is equal to $\CO_K/2\CO_K$, it has been shown independently by several authors (J. Choi \cite{JC}, and K. Li and Y. Ren \cite{LR}) that one can carry out a classical 2-descent argument on $A^{(R)}/H$ and prove that $A^{(R)}(H)$ is in fact finite for all $R \in \fR$. However, such an argument tells us nothing about the finiteness of the 2-primary subgroup of the Tate-Shafarevich group $\Sha(A^{(R)})$ for $R \in \fR$, and thus does not allow any hope of proving  a result like Theorem \ref{main} easily from a suitable main conjecture for $A^{(R)}$ over an appropriate $\BZ_2$-extension of $H$.  We remark that for the special case $q=7$, the curve $A$ is the modular elliptic curve $X_0(49)$, and the above type theorem is already proved in \cite{CLTZ} by two rather different methods. In fact, we use a modification of one of these methods, due originally to C. Zhao \cite{Zhao1}, \cite{Zhao2}, to prove the above theorem.   However, the proof is considerably more difficult when $q > 7$. In particular, we have to make essential use of the abelian variety $B/K$ which is the restriction of scalars from $H$ to $K$ of $A/H$, and appeal to one of the main results of our earlier paper \cite{CL}, to establish several key results  (see Theorem \ref{key-identity} and
Proposition \ref{4.4n}) needed for carrying out Zhao's argument in this case. In addition, we need a rather delicate result from \cite{BG}.  In a subsequent paper with Y. Kezuka and Y. Tian \cite{CKLT}, we shall use the methods of Iwasawa theory to show that the above theorem  implies that the Tate-Shafarevich group of $A^{(R)}/H$ is indeed finite for all $R \in \fR$ when $q \equiv 7 \mod 16$, and that the order of its $p$-primary subgroup is as predicted by the conjecture of Birch and Swinnerton-Dyer for all primes $p$ which split in $K$. We are even optimistic that one should eventually be able to prove the full Birch-Swinnerton-Dyer conjectural formula for the the order of the Tate-Shafrevich group of $A^{(R)}/H$ for all $R \in \fR$ .

\section{ Preliminaries}

Gross \cite{Gross82}has shown that there always exists a global minimal Weierstrass equation for $A/H$. We fix one such equation for the rest of the paper
\begin{equation}\label{1}
  y^2+a_1xy+a_3y=x^3+a_2x^2+a_4x+a_6
\end{equation}
whose coefficents $a_i$ are integers in $H$.  We write $B/K$ be the abelian variety which is the restriction of scalars from $H$ to $K$ of the Gross curve $A/H$. For each $R \in \fR$, we write $B^{(R)}$ for the twist of $B$ by the quadratic extension $K(\sqrt{R})/K$, and $A^{(R)}$ for the twist of $A$ by the quadratic extension $H(\sqrt{R})/H$. It is easily seen that $B^{(R)}$ is in fact the restriction of scalars from $H$ to $K$ of $A^{(R)}$. Let $\psi$ for the Serre-Tate character of $A/H$, and $\phi$ the Serre-Tate character of $B/K$, so that $\psi = \phi \circ N_{H/K}$, where $N_{H/K}$
denotes the norm map from $H$ to $K$. Then, since $(R, q) = 1$, the Serre-Tate character  $\phi_R$ of $B^{(R)}/K$ is given by $\phi_R=\phi\chi_R$, where $\chi_R$ denotes the abelian character of $K$ defining the quadratic extension $K(\sqrt{R})/K$. Moreover, since $H/K$ is unramified,  the Serre-Tate character  $\psi_R$ of $A^{(R)}/H$ is then equal to
$\psi_R=\phi_R\circ N_{H/K}.$

 We write
$$
\mst=\End_K(B^{(R)})=\End_K(B), \, \, T = \mst\otimes_\BZ \BQ.
$$
Then $T$ is a CM field of degree $h$ over $K$, where $h$ denotes the class number of $K$.  In what follows, we shall write $\ov{\phi}$ for the Grossencharacter obtained by applying the unique complex conjugation in $T$ to $\phi$. Moreover, the index of $\mst$ in the maximal order of $T$ is prime to 2 (see \cite{Gross78}, \S13).  As $q\equiv 7\mod 8$, the prime $2$ splits in $K$ into two distinct primes, which we will denote by $\fp$, and $\fp^*$. The following lemma (see \cite{BG} or Lemma 2.1 in \cite{CL}) gives the existence of a degree one prime of $T$ above $\fp$, which will play a fundamental role in our subsequent  generalization of Zhao's induction argument.

\begin{lem}\label{s1}
There exists an unramified degree one prime $\fP$ of $T$ lying above $\fp$.
\end{lem}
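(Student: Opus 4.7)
The plan is to exhibit $\fP$ as the prime of $T$ generated by the value $\phi(\fp)$, following the explicit analysis of the CM Grossencharacter carried out in \cite{BG}.

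First I would record the basic local picture. Since $q\equiv 7\pmod 8$, the rational prime $2$ splits in $K$ as $\fp\fp^*$, both unramified over $\BQ$ with residue field $\BF_2$; in particular, any prime $\fP$ of $T$ above $\fp$ has degree one over $\BQ$ exactly when $f(\fP/\fp)=1$. Second, the class number $h=[T:K]$ is odd: by genus theory, $\Cl(K)$ has trivial $2$-part because the discriminant of $K=\BQ(\sqrt{-q})$ equals $-q$, a single prime.

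Next I would verify the unramifiedness. By \eqref{mg}, the minimal discriminant of $A/H$ is $(-q^3)$, so $A/H$ has good reduction at every prime of $H$ above $2$; by functoriality of restriction of scalars, $B/K$ has good reduction at $\fp$ as well. Consequently the Serre--Tate character $\phi$ is unramified at $\fp$. Since $T$ is generated over $K$ by the values of $\phi$ on ideals coprime to its conductor, $T/K$ is unramified at $\fp$, and $\fp\CO_T$ is square-free.

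The crucial step is to pin down the inertia degree. From the identity $\phi(\fp)\cdot\ov{\phi}(\fp)=N_{K/\BQ}(\fp)=2$, the principal ideal $(\phi(\fp))$ has absolute norm $2$ and divides $2\,\CO_T$. Standard CM theory (the Shimura--Taniyama relation at a prime of good reduction) shows that $(\phi(\fp))$ is supported only on primes of $T$ above $\fp$, not on those above $\fp^*$. Every prime of $T$ above $\fp$ has absolute norm $2^f$ for some $f\ge 1$, so a factorisation $(\phi(\fp))=\prod_i\fP_i^{a_i}$ with $a_i\ge 1$ and total absolute norm $2$ forces a single factor with $a_i=1$ and $f(\fP_i/\fp)=1$. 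This $\fP_i$ is the prime $\fP$ we seek.

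The main obstacle is justifying that $(\phi(\fp))$ is supported solely on primes above $\fp$; this is a structural feature of the CM Grossencharacter of $B/K$ rather than a purely formal consequence of the norm identity. Here I would invoke the explicit description in \cite{BG} (and its incarnation as Lemma 2.1 of \cite{CL}) rather than reproduce the analysis.
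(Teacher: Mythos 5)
The paper offers no proof of Lemma~\ref{s1}; it is a direct citation to \cite{BG} and to Lemma 2.1 of \cite{CL}. Your proposal does attempt a self-contained argument, so I will assess it on its own terms, and there is a genuine gap in the norm computation at the heart of it. You assert that the principal ideal $(\phi(\fp))$ of $\CO_T$ has absolute norm $2$, deduced from $\phi(\fp)\cdot\ov{\phi}(\fp)=N_{K/\BQ}\fp=2$. But this last identity is an equation in $T$, where $\ov{\phi}(\fp)=c(\phi(\fp))$ for $c$ the complex conjugation of the CM field $T$. Applying $N_{T/\BQ}$ to both sides, and using that $c$ is a $\BQ$-automorphism of $T$, gives $N_{T/\BQ}(\phi(\fp))^{2}=N_{T/\BQ}(2)=2^{2h}$, hence $N_{T/\BQ}\big((\phi(\fp))\big)=2^{h}$, not $2$. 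With this corrected norm the "pinning down the inertia degree" step collapses: even granting the Shimura--Taniyama input that $(\phi(\fp))$ is supported only over $\fp$, the most one extracts from the norm identity (together with unramifiedness) is the principality statement $(\phi(\fp))=\fp\CO_T=\prod_i\fP_i$ with $\sum_i f(\fP_i/\fp)=h$. That constraint is perfectly consistent with every $\fP_i$ having $f(\fP_i/\fp)>1$ (for instance $\fp$ inert in $T/K$), so it does not produce a degree-one prime.

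A secondary soft spot: you infer that $T/K$ is unramified at $\fp$ "since $T$ is generated over $K$ by the values of $\phi$". The conclusion is correct in this setting (the conductor of $T/K$ divides $\fq$, and $\fp\nmid\fq$), but the inference as stated — from $\phi$ being unramified at $\fp$ to the field of values being unramified at $\fp$ — is not a formal implication and needs a reference or argument of its own. The real content of the lemma is exactly the splitting of $\fp$ in $T$, and that is the part \cite{BG}~\S3 (and Lemma 2.1 of \cite{CL}) actually establishes by an analysis of the structure of $T$ over $2$ (via the formal group of $B$ at $\fp$ and the explicit description of $\phi$), rather than by a norm count. Your plan defers to that reference at the end, but the framing around the absolute norm being $2$ would, if carried out literally, give a false intermediate step and no route to the conclusion.
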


\noindent Of course, since the index of $\mst$ in the maximal ideal of $T$ is prime to 2, $\fP \cap \mst$ will be a degree one prime ideal of $\mst$, which for simplicity we shall again denote by $\fP$.

\section{A special value formula for certain Hecke characters}

Throughout $R$ will denote an arbitrary positive integer in the set $\fR$. For each positive divisor $d>1$ of $R$, we let $\chi_d$ be the non-trivial character of $\Gal(K(\sqrt{d})/K)$, and we define $\phi_d = \phi\chi_d$, where, as always, $\phi$ denotes the Hecke character attached to the abelian variety $B/K$. We also put $\phi_1 = \phi$. Recall that  $\fq = \sqrt{-q}\cdot\CO_K$ is the conductor of $\phi$.

\begin{lem}\label{3.1}
For every positive divisor $d$ of $R$, the Hecke character $\phi_d$ has conductor $d\fq$.
\end{lem}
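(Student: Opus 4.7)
The plan is to compute the conductor of $\chi_d$ separately, and then use that the conductors of $\phi$ and $\chi_d$ have disjoint support in order to combine them multiplicatively.

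First I would compute the conductor of $\chi_d$, which is the conductor of the quadratic extension $K(\sqrt{d})/K$. I would go through the primes of $K$ and determine which ramify. At each inert prime $(r_i)$ of $K$ (these have odd residue characteristic, and $d$ has valuation $1$ there since the $r_i$ are distinct), the extension is tamely ramified, so the local conductor equals $(r_i)^1$. At any prime $\mathfrak{l}$ of $K$ not dividing $2d$, $d$ is a local unit and the extension is unramified. The delicate step is at the primes above $2$: since $q\equiv 7\mod 8$, we have $(2)=\fp\fp^*$ with $K_\fp=\BQ_2$. Here $d$ is a local unit, and because each $r_i\equiv 1\mod 4$ we get $d\equiv 1\text{ or }5\mod 8$, so $\BQ_2(\sqrt{d})$ is either $\BQ_2$ itself or the unique unramified quadratic extension of $\BQ_2$ (the one obtained by adjoining $\sqrt{5}$). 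In either case $\fp$ and $\fp^*$ are unramified in $K(\sqrt{d})/K$. Combining these local contributions gives $\cond(\chi_d)=d\CO_K$.

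Next I would combine this with $\phi$. By hypothesis $\phi$ has conductor $\fq=(\sqrt{-q})$, supported entirely at the prime of $K$ above $q$, whereas $\cond(\chi_d)=d\CO_K$ is supported at the inert primes $(r_i)$. Because $q\equiv 3\mod 4$ while each $r_i\equiv 1\mod 4$, we have $r_i\neq q$, so the two supports are disjoint. Therefore at each prime $\mathfrak{l}$ of $K$ at most one of $\phi$ and $\chi_d$ is ramified, and the local conductor of the product $\phi_d=\phi\chi_d$ equals the local conductor of whichever factor is ramified there (no cancellation is possible when only one factor is ramified). Multiplying the local conductors yields $\cond(\phi_d)=\fq\cdot d\CO_K=d\fq$.

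I expect the main (and essentially only) obstacle to be the verification at the primes above $2$, where one must not confuse "trivial local extension" with "unramified local extension." The point is that although the condition $r_i\equiv 1\mod 4$ is not enough to force $d\equiv 1\mod 8$, the structure of $\BQ_2^\times/(\BQ_2^\times)^2$ guarantees that any odd square class represented by an element $\equiv 1\text{ or }5\mod 8$ gives an unramified quadratic extension of $\BQ_2$; everything else in the argument is the standard tame conductor computation and the elementary observation on conductors of products of characters with coprime conductors.
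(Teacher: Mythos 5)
Your proof is correct and follows essentially the same approach as the paper's: first show $\chi_d$ has conductor $d\CO_K$, then use that $\fq$ and $d\CO_K$ are coprime to conclude $\cond(\phi\chi_d)=d\fq$. The paper states the first step as a one-line consequence of ``$d$ squarefree and $d\equiv 1\bmod 4$,'' while you spell out the local conductor computation prime by prime (tame ramification at the $(r_i)$, and the mod-$8$ analysis at $\fp,\fp^*$ using $K_\fp=\BQ_2$), but the underlying argument is the same.
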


\begin{proof}
The positive integer $d$ is square free, and satisfies $d \equiv 1 \mod 4$ since each prime dividing $d$ is $\equiv 1 \mod 4$, from which it follows that $\chi_d$ has conductor $d\CO_K$. But $\phi$ has conductor $\fq$, which is prime to $d\CO_K$, and thus $\phi_d$ will have conductor $d\fq$.
\end{proof}

The Neron differential $\omega$ of $A$ is given by $\omega=dx/(2y+a_1x+a_3)$. Now we have fixed an embedding of $K$ into $\BC$, and, since $H = K(j(\CO_K)$,
this extends to a fixed embedding of $H$ into $\BC$. Thus the complex period lattice $\CL$ of $\omega$ under this embedding must be of the form $\CL=\Omega_\infty\CO_K$, where $\Omega_\infty$ is a non-zero complex number which is uniquely determined up to sign. We also need to consider the conjugate curves of $A/H$ under the action by the Galois group $\fG=\Gal(H/K)$. If $\fa$ is any integral ideal of $K$ prime to $R\fq$, let $\gamma_\fa$ be the Artin symbol of $\fa$ in $\fG$, and write $A^\fa/H$
for the conjugate of $A/H$ under the action of $\gamma_\fa$. Clearly a global minimal Weierstrass equation for $A^\fa/H$ is given by applying $\gamma_\fa$
to the coefficients of the equation \eqref{1}, and the Neron differential of this equation is $\omega_\fa = dx/(2y+\gamma_\fa(a_1)x + \gamma_\fa(a_3) )$. We can describe the complex period lattice $\fL_\fa$ of $\omega_\fa$ as follows. Since $B/H$ is isomorphic to $\prod_{\gamma_\fa\in\fG}A^\fa$, then, for any integral ideal $\fa$ of $K$ prime to $\fq$, the endomoprhism   $\phi(\fa)$ of $B$, when restricted to $A$, gives an $H$-isogeny
$\eta_{A}(\fa): A \ra A^{\fa}$ with kernel $A_\fa$. Taking the pull back of the differential $\omega_\fa$ under this isogeny, we conclude that there exists a non-zero $\xi(\fa)$ in $H$ such that $\eta_A(\fa)^*(\omega_\fa)=\xi(\fa)\omega.$ Thus the period lattice $\CL_\fa$ of $\omega_\fa$ is given by $\CL_\fa=\xi(\fa)\fa^{-1}\Omega$.

For each positive divisor $d$ of $R$, write $E =A^{(d)}$ for the twist of the Gross curve $A/H$ by the extension $H(\sqrt{d})/H$.  When $\fb$ is any non-zero integral ideal of $\CO_K$, let $E_\fb$ be the set of all elements of $E(\ov{K})$ which are annihilated by every endomorphism in $\fb$. The following lemma is very well known (see, for example,
Lemma 4.1 of \cite{CL}).

\begin{lem}\label{3.2}
For every positive divisor $d$ of $R$, $H(E_{R\fq })$ is the ray class field over $K$ modulo $R\fq$.
\end{lem}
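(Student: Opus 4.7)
The plan is to deduce the lemma from the case $d=1$, which is Lemma 4.1 of \cite{CL}, by exploiting the twist isomorphism $E \simeq A$ over $H(\sqrt d)$ together with the conductor calculation of Lemma \ref{3.1}. Throughout, write $K_{R\fq}$ for the ray class field of $K$ modulo $R\fq$.

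First I would establish the containment $H(E_{R\fq}) \subseteq K_{R\fq}$. The twist $E = A^{(d)}$ has complex multiplication by $\CO_K$, and its Hecke character $\psi_d = \phi_d \circ N_{H/K}$ has conductor dividing $d\fq\CO_H$ by Lemma \ref{3.1}. Since $d \mid R$, $d\fq \mid R\fq$. Passing to the abelian variety $B^{(d)} = \Res_{H/K}(E)$ over $K$, whose Grossencharacter is $\phi_d$, the standard main theorem of complex multiplication shows that $K(B^{(d)}_{R\fq})$ is abelian over $K$ and unramified outside $R\fq$, and in particular contains $H$; as $H(E_{R\fq}) \subseteq K(B^{(d)}_{R\fq})$, we conclude $H(E_{R\fq}) \subseteq K_{R\fq}$.

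For the reverse containment, I would first observe that over $H(\sqrt d)$ the curve $E$ becomes isomorphic to $A$, so $H(\sqrt d) \cdot H(E_{R\fq}) = H(\sqrt d) \cdot H(A_{R\fq})$. Since $d$ is squarefree with each prime factor $\equiv 1 \mod 4$, $d \equiv 1 \mod 4$, so $\sqrt d \in \BQ(\zeta_d) \subseteq \BQ(\zeta_R) \subseteq K_{R\fq}$, whence $H(\sqrt d) \subseteq K_{R\fq}$. Combined with the case $d = 1$ of \cite[Lem.~4.1]{CL}, namely $H(A_{R\fq}) = K_{R\fq}$, this yields
$$H(\sqrt d) \cdot H(E_{R\fq}) = H(\sqrt d) \cdot K_{R\fq} = K_{R\fq},$$
so $[K_{R\fq} : H(E_{R\fq})] \le 2$.

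The main obstacle is to rule out the index-two possibility. For this I would revisit the CM-theoretic proof of \cite[Lem.~4.1]{CL}, which depends only on the fact that the Hecke character of the given CM elliptic curve over $H$ has conductor dividing $R\fq$; by Lemma \ref{3.1}, $\phi_d$ enjoys exactly this property. Running that argument with $\phi_d$ (and the abelian variety $B^{(d)}$) in place of $\phi$ (and $B$) shows that the image of the Galois action on $B^{(d)}_{R\fq}$ exhausts the full group $\Gal(K_{R\fq}/K)$, which together with the inclusion and index bound above forces $H(E_{R\fq}) = K_{R\fq}$.
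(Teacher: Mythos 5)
The paper offers no proof of this lemma: it is asserted to be ``very well known'' with a pointer to Lemma~4.1 of \cite{CL}, so there is nothing in the paper to compare your argument against. On its own terms your proposal is essentially correct but structurally redundant. Your Step~3 amounts to re-running the CM-theoretic argument behind \cite[Lem.~4.1]{CL} with $\phi_d$ (of conductor $d\fq\mid R\fq$ by Lemma~\ref{3.1}) in place of $\phi$; if that works, it directly gives the equality and Steps~1 and~2 contribute nothing. As written, Step~3 is also the weakest part: the phrase ``the image of the Galois action on $B^{(d)}_{R\fq}$ exhausts the full group'' leaves unexamined the relationship between $K(B^{(d)}_{R\fq})$ and $H(E_{R\fq})$ (the torsion of the Weil restriction $B^{(d)}=\Res_{H/K}E$ is a product over $\Gal(H/K)$-conjugates of $E$, with a twisted action), and it is precisely in untangling this that the work of a real proof lies.

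If instead you want to genuinely deduce the lemma from the $d=1$ case together with the twist isomorphism — which is a nice and self-contained route — there is a cleaner way to close the index-two gap left after Step~2. For each prime $r\mid R$ the $r$ is inert in $K$, so $E[r]=E_{r\CO_K}\subseteq E_{R\fq}$, and the Weil pairing gives $\mu_r\subseteq H(E[r])\subseteq H(E_{R\fq})$. Hence $\BQ(\zeta_R)\subseteq H(E_{R\fq})$, and since $d\mid R$ with $d\equiv 1\bmod 4$ we get $\sqrt{d}\in\BQ(\zeta_d)\subseteq H(E_{R\fq})$. Combined with your Step~2 identity $H(\sqrt{d})\cdot H(E_{R\fq})=K_{R\fq}$, this forces $H(E_{R\fq})=K_{R\fq}$ with no further appeal to the internals of \cite{CL}. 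One smaller point: in Step~1 the phrase ``abelian over $K$ and unramified outside $R\fq$'' is not by itself enough to place $K(B^{(d)}_{R\fq})$ inside the ray class field modulo $R\fq$; you need the conductor of the extension to actually divide $R\fq$, which does follow from the conductor bound on $\phi_d$, but the wording should make that the operative fact rather than mere unramifiedness.
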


\noindent Note also that for the curve $E$, and each integral ideal $\fa$ of $K$ which is prime to $R\fq$, the endomorphism $\phi_d(\fa)$ of $B^{(d)}$ defines a $H$-isogeny $\eta_{E}(\fa): E \ra E^{\fa}$ with kernel $E_\fa$. Gross has shown in \cite{Gross82} (see Prop. 4.3) that the differential $\omega/\sqrt{d}$ on $A/H(\sqrt{d})$ descends to a global minimal differential on $E/H$, which we denote by $\omega(d)$. Again we write $\omega(d)_\fa = \gamma_\fa(\omega(d))$ for the Neron differential on the curve $E^\fa$. The following lemma is then clear from Gross' result, and Proposition 4.10, (vi) of \cite{GS}.

\begin{lem}\label{min} For each positive divisor $d$ of $R$, the complex period lattice of $\omega(d)$ is equal to $\Omega_\infty\CO_K/\sqrt{d}$, and, for each integral ideal $\fa$ of $K$ prime to $R\fq$, we have $\eta_E(\fa)^*(\omega(d)_\fa)=\xi_d(\fa)\omega(d)$, where $\xi_d(\fa) = \xi(\fa)/\chi_d(\fa)$.
\end{lem}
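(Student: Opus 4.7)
The plan is to deduce both claims from Gross's descent identity. Over the extension $H(\sqrt{d})$ of $H$, the quadratic twist $E = A^{(d)}$ becomes isomorphic to $A$ via an isomorphism $\alpha$, and by Proposition 4.3 of \cite{Gross82}, one has $\alpha^{*}(\omega(d)) = \omega/\sqrt{d}$ as differentials on $A/H(\sqrt{d})$. Proposition 4.10,~(vi) of \cite{GS} will then supply the compatibility between $\eta_E(\fa)$ and $\eta_A(\fa)$ needed for the second assertion.

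The first assertion is immediate from this: $\alpha$ induces a complex-analytic isomorphism of universal covers on $\BC$-points, and $\omega/\sqrt{d}$ on $A(\BC)$ has period lattice $\Omega_\infty \CO_K/\sqrt{d}$ since $\omega$ has period lattice $\Omega_\infty \CO_K$. Transporting via $\alpha$ shows that the period lattice of $\omega(d)$ on $E(\BC)$ is $\Omega_\infty \CO_K/\sqrt{d}$.

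For the pullback formula I would work throughout over $H(\sqrt{d})$. Applying the Artin symbol $\gamma_\fa$, viewed as an element of $\Gal(H(\sqrt{d})/K)$ via the canonical reciprocity lift, to the identity $\alpha^{*}(\omega(d)) = \omega/\sqrt{d}$ and using $\gamma_\fa(\sqrt{d}) = \chi_d(\fa)\sqrt{d}$ gives $\gamma_\fa(\alpha)^{*}(\omega(d)_\fa) = \omega_\fa/(\chi_d(\fa)\sqrt{d})$ on $A^\fa/H(\sqrt{d})$. Combining this with the formula $\eta_A(\fa)^{*}(\omega_\fa) = \xi(\fa)\omega$ recalled before the lemma, and with the identification $\eta_E(\fa) \circ \alpha = \gamma_\fa(\alpha) \circ \eta_A(\fa)$ furnished by Proposition 4.10,~(vi) of \cite{GS}, one obtains $\alpha^{*}\bigl(\eta_E(\fa)^{*}(\omega(d)_\fa)\bigr) = (\xi(\fa)/\chi_d(\fa))\,\omega/\sqrt{d} = (\xi(\fa)/\chi_d(\fa))\,\alpha^{*}(\omega(d))$. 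Since $\alpha^{*}$ is injective on invariant differentials, this yields $\eta_E(\fa)^{*}(\omega(d)_\fa) = \xi_d(\fa)\,\omega(d)$ with $\xi_d(\fa) = \xi(\fa)/\chi_d(\fa)$, as required.

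The main obstacle, and the content of the cited result from \cite{GS}, is verifying the identification $\eta_E(\fa) \circ \alpha = \gamma_\fa(\alpha) \circ \eta_A(\fa)$ on the nose (and not merely up to an automorphism $c \in \{\pm 1\}$ of $E^\fa$). Both sides are $H(\sqrt{d})$-isogenies $A \to E^\fa$ with kernel $A_\fa$, so they can differ only by $\pm 1$; getting $c = 1$ is precisely what ensures that the $\chi_d(\fa)^{-1}$ produced by Galois-conjugating $\sqrt{d}$ survives into $\xi_d(\fa)$ rather than being cancelled, and in particular that $\xi_d(\fa) = \chi_d(\fa)\xi(\fa)$ indeed lies in $H$.
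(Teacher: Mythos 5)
Your proof is correct and is precisely an expansion of what the paper leaves implicit: the paper's entire proof of this lemma is the single sentence that it is ``clear from Gross' result, and Proposition 4.10,~(vi) of \cite{GS}'', and you have developed exactly those two ingredients (the descent $\alpha^{*}(\omega(d))=\omega/\sqrt d$ from \cite{Gross82} Prop.~4.3, and the compatibility $\eta_E(\fa)\circ\alpha = \gamma_\fa(\alpha)\circ\eta_A(\fa)$ from \cite{GS} Prop.~4.10(vi)) into a complete argument. One very small imprecision in your closing remark: since $\chi_d(\fa)=\pm1$ the quantity $\chi_d(\fa)\xi(\fa)$ lies in $H$ regardless of whether $c=1$ or $c=\chi_d(\fa)$, so rationality over $H$ is not what is at stake; what getting $c=1$ actually secures is the precise constant $\xi(\fa)/\chi_d(\fa)$ rather than $\xi(\fa)$.
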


 Always assuming that $d$ is a positive divisor of $R$, we define the imprimitive partial Hecke $L$-series
\begin{equation}\label{2}
L_R(\ov{\phi_d},\gamma_\fa, s)=\sum_{ (\fb,R\fq)=1, \gamma_\fb=\gamma_\fa}\frac{\ov{\phi_d}(\fb)}{N(\fb)^s}.
\end{equation}
where the sum on the right is taken over all integral ideals $\fb$ of $K$, which are prime to $R\fq$, and are such that $\gamma_\fb=\gamma_\fa$. It is classical
that the Dirichlet series on the right converges for $R(s) > 3/2$, and it has a holomorphic continuation to the whole complex plane. We first recall a classical formula for $L_R(\ov{\phi_d},\gamma_\fa, 1)$, which essentially goes back to the 19th century (see \cite{GS}). If $\fL$ be any lattice in the complex plane, recall that the Kronecker-Eisenstein series $H_1(z,s,\fL)$ is defined by
\begin{equation}\label{3}
H_1(z,s,\fL)=\sum_{w\in \fL} \frac{\ov{z+w}}{|z+w|^{2s}},
\end{equation}
where the sum is taken over all $w\in \fL$, except $-z$ if $z\in \fL$. This series converges in the half plane $\Re(s)>\frac{3}{2}$, and it has an analytic continuation to the whole $s$-plane. We then define the Eisenstein series $\CE^*_1(z,\fL)$ of weight 1 by
$$
\CE^*_1(z,\fL)=H_1(z,1,\fL).
$$
 We write $K(R\fq)$ for the ray class field of $K$ modulo $R\fq$, so that, by Lemma \ref{3.2},
we have $H(E^\fa_{R\fq}) = K(R\fq),$ for all integral ideals $\fa$ of $K$ which are prime to $R\fq$. Let $\textrm{Tr}_{K(R\fq)/H}$ be the trace map from $K(R\fq)$ to $H$.
\begin{prop}\label{3.3n}
Assume $R \in \fR$, and let $d$ be any positive divisor of $R$. Then, for all integral ideals $\fa$ of $K$, which are prime to $R\fq$, we have
\begin{equation}\label{4}
\frac{\phi_d(\fa)R\sqrt{-qd}}{\xi_d(\fa)}\cdot \frac{L_R(\ov{\phi}_d,\gamma_\fa,1)}{\Omega_\infty}=\textrm{Tr}_{K(R\fq)/H}\left(\CE^*_1\left(\frac{\xi_d(\fa)\Omega_\infty}{R\sqrt{-qd}},\frac{1}{\sqrt{d}}\CL_\fa\right)\right).
\end{equation}
\end{prop}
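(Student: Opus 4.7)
The plan is to view identity \eqref{4} as an imprimitive Damerell-Kronecker-Eisenstein formula in the spirit of \cite{GS}. The trace from $K(R\fq)$ on the right-hand side arises because the condition $(\fb, R\fq)=1$ on the left restricts the sum to a union of residue classes modulo $R\fq$, which by class field theory is exactly the orbit structure of $\Gal(K(R\fq)/H)$ acting on $R\fq$-division points of the CM curve $E^\fa$.

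First, I would parametrize the sum over $\fb$. Every integral ideal $\fb$ with $\gamma_\fb=\gamma_\fa$ can be uniquely written as $\fb=(\alpha)\fa$ with $\alpha\in\fa^{-1}\setminus\{0\}$ modulo units, and the condition $(\fb, R\fq)=1$ is equivalent to the image of $\alpha$ in $\fa^{-1}/R\fq\fa^{-1}\cong\CO_K/R\fq$ being a unit. By Lemma \ref{3.1}, $\phi_d$ has conductor $d\fq$, so on principal ideals coprime to $d\fq$ one has $\phi_d((\alpha))=\alpha\,\epsilon_d(\alpha)$ for a Dirichlet character $\epsilon_d$ modulo $d\fq$. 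This rewrites
\[L_R(\ov{\phi}_d,\gamma_\fa,1)=\frac{\ov{\phi_d(\fa)}}{N\fa}\sum_\alpha\frac{\ov{\alpha}\,\ov{\epsilon_d(\alpha)}}{|\alpha|^2},\]
where the sum runs over representatives of $(\CO_K/R\fq)^\times/\CO_K^\times$-classes with $\alpha\in\fa^{-1}\setminus\{0\}$.

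Next, I would break this lattice sum according to the residue of $\alpha$ in $(\CO_K/R\fq)^\times/\CO_K^\times$, a group canonically isomorphic via the Artin map to $\Gal(K(R\fq)/H)$ (since $H$ is the Hilbert class field of $K$). For each fixed residue class $[\alpha_0]$ one obtains a Kronecker-Eisenstein value $\CE^*_1(z_0, \CL_\fa/\sqrt{d})$ at a specific $R\fq$-division point $z_0=z_0(\alpha_0)$, with lattice $\CL_\fa/\sqrt{d}$ as in Lemma \ref{min}. By the main theorem of complex multiplication, the action of $\Gal(K(R\fq)/H)$ on $R\fq$-division points of $E^\fa$ is by multiplication via $(\CO_K/R\fq)^\times/\CO_K^\times$, so these contributions are precisely the Galois conjugates of the single value $\CE^*_1\bigl(\xi_d(\fa)\Omega_\infty/(R\sqrt{-qd}),\, \CL_\fa/\sqrt{d}\bigr)$, and their sum reassembles as the trace from $K(R\fq)$ to $H$. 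Lemma \ref{3.2} is used here to ensure that $K(R\fq)$ is the field of definition of the $R\fq$-division of $E^\fa$ over $H$.

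Collecting prefactors---$N\fa$, the normalization $\CL_\fa=\xi(\fa)\Omega_\infty\fa^{-1}$, the denominator $R\sqrt{-qd}$ of $z_0$, and the twist $\xi_d(\fa)=\xi(\fa)/\chi_d(\fa)$ from Lemma \ref{min}---produces the constant $\phi_d(\fa)R\sqrt{-qd}/(\xi_d(\fa)\Omega_\infty)$ on the left. The main obstacle will lie in the third step: one must verify that the isomorphism $(\CO_K/R\fq)^\times/\CO_K^\times\cong\Gal(K(R\fq)/H)$ sends a residue $[\alpha_0]$ to a Galois element whose action on the division point is multiplication by $\alpha_0$, and simultaneously that the residue-character factor $\ov{\epsilon_d(\alpha_0)}$ is exactly cancelled by the $\chi_d$-twist that distinguishes $\xi_d$ from $\xi$. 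This compatibility between CM reciprocity and the quadratic character $\chi_d$ is the delicate point, and is where Lemma \ref{min} plays its decisive role.
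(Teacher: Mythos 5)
Your plan re-derives, rather than cites, the two facts from Goldstein--Schappacher \cite{GS} that constitute the paper's entire proof, so the underlying mathematics is the same. The paper applies Proposition 5.5 of \cite{GS} to the curve $E = A^{(d)}$ over $H$ with $\fg = R\fq$ and $\rho = \Omega_\infty/(R\sqrt{-qd})$ --- invoking Lemma \ref{3.2} for $H(E_\fg) = K(R\fq)$ and Lemma \ref{min} for $\eta_E(\fa)^*(\omega(d)_\fa) = \xi_d(\fa)\omega(d)$ --- to produce the left side of \eqref{4} as $\sum_{\fb\in\fB}\CE^*_1\bigl(\phi_d(\fb)\xi_d(\fa)\Omega_\infty/(R\sqrt{-qd}),\,\CL_\fa/\sqrt d\bigr)$ for a set $\fB$ of primes whose Artin symbols exhaust $\Gal(K(R\fq)/H)$; then Theorem 6.2 of \cite{GS} identifies that sum with the trace, since each such Artin symbol fixes $H$ and acts on the division value exactly by $\phi_d(\fb)$. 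Your steps --- writing $\fb=(\alpha)\fa$, extracting a finite character $\epsilon_d$ of conductor $d\fq$, and grouping the lattice sum by residue class modulo $R\fq$ --- are a sketch of the proof of Proposition 5.5 of \cite{GS}, and your final reassembly step is Theorem 6.2 of \cite{GS}.

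The point you flag at the end is a genuine gap, not a detail: you do not verify that the isomorphism $(\CO_K/R\fq)^\times/\CO_K^\times\cong\Gal(K(R\fq)/H)$ sends $[\alpha_0]$ to the automorphism acting on the division point of $E^\fa$ by multiplication by $\alpha_0$ (with the right orientation), nor that the residue-character factor $\ov{\epsilon_d(\alpha_0)}$ is exactly absorbed by the twist $\xi_d(\fa)=\xi(\fa)/\chi_d(\fa)$ of Lemma \ref{min}. That compatibility is precisely the content of the main theorem of complex multiplication as packaged in Theorem 6.2 of \cite{GS}, combined with Gross's result that $\omega(d)=\omega/\sqrt d$ is a minimal differential for $E=A^{(d)}$. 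There are also smaller unaddressed normalizations (the unit group $\CO_K^\times=\{\pm1\}$ and the oddness of $\epsilon_d$, and the cancellation $\phi_d(\fa)\ov{\phi_d(\fa)}=N\fa$) needed for the prefactor $\phi_d(\fa)R\sqrt{-qd}/\xi_d(\fa)$ to come out correctly. Filling all of this in would in effect reproduce the \cite{GS} computation; citing it directly, as the paper does, is far more economical.
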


\begin{proof} We apply Proposition 5.5 of \cite{GS} to the curve $E=A^{(d)}$ over $H$, with $\fg = R\fq$, and  $ \rho = \Omega_\infty/(R\sqrt{-qd})$. Recalling that $H(E_\fg) = K(Rq)$ by Lemma \ref{3.2}, and that  $\eta_E(\fa)^*(\omega(d)_\fa)=\xi_d(\fa)\omega(d)$ by Lemma \ref{min}, we conclude that
\begin{equation}\label{5}
\frac{\phi_d(\fa)R\sqrt{-qd}}{\xi_d(\fa)}\cdot \frac{L_R(\ov{\phi}_d,\gamma_\fa,1)}{\Omega_\infty} = \sum_{\fb \in \fB} \CE^*_1\left(\frac{\phi_d(\fb)\xi_d(\fa)\Omega_\infty}{R\sqrt{-qd}},\frac{1}{\sqrt{d}}\CL_\fa\right),
\end{equation}
where $\fB$ denotes any set of integral prime ideals of $K$, prime to $R\fq$, whose Artin symbols in $\Gal(K(R\fq)/K)$ give precisely $\Gal(K(R\fq)/H)$.   However, by \cite{GS},
Theorem 6.2, we note that, since the Artin symbol of every ideal $\fb \in \fB$ fixes the field of definition $H$ of $E$, the right hand side of \eqref{5} is none other than the right hand side of \eqref{4}. This completes the proof.
\end{proof}

As in the Introduction, we write $R = r_1...r_k$, where $k \geq 1$, and the $r_i$ are distinct rational primes. Define the two fields
\begin{equation}\label{6}
J_R = K(\sqrt{r_1}, \dots, \sqrt{r_k}), \, \, H_R = H(\sqrt{r_1}, \dots, \sqrt{r_k}).
\end{equation}
\begin{lem}\label{3.4n} We have $J_R \cap H = K$, $[H_R:J_R] = h$, and $H_R \subset K(R\fq)$. Moreover, for each positive divisor $d$ of $R$, $B^{(d)}$ is isomorphic to
$B$ over $J_R$, and $A^{(d)}$ is isomorphic to $A$ over $H_R$.

\end{lem}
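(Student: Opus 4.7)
The plan is to establish the five assertions in sequence; the degree computations and class-field-theoretic containment feed directly into the statements about twists, and the whole argument turns on a single arithmetic input, namely that the class number $h$ of $K$ is odd. This is classical for $K=\BQ(\sqrt{-q})$ with $q$ an odd prime, by genus theory: the only prime of $\BQ$ that ramifies in $K$ is $q$, so the $2$-rank of the class group of $K$ vanishes.

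I would begin with the degree of $J_R$ over $K$. The field $\BQ(\sqrt{r_1},\ldots,\sqrt{r_k})$ has degree $2^k$ over $\BQ$ (standard for distinct primes), and is totally real, while $K$ is imaginary quadratic; the two are therefore linearly disjoint over $\BQ$, so $[J_R:K]=2^k$ and $\Gal(J_R/K)\cong (\BZ/2\BZ)^k$. Consequently every subextension of $J_R/K$ has $2$-power degree over $K$, whereas every subextension of $H/K$ has degree dividing the odd integer $h$. Hence $[J_R\cap H:K]$ is simultaneously a power of $2$ and odd, forcing $J_R\cap H=K$. This linear disjointness then gives $[H_R:K]=[H:K]\cdot[J_R:K]=2^k h$, whence $[H_R:J_R]=h$.

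For the ray-class-field inclusion I would invoke Lemma \ref{3.1}: each character $\chi_{r_i}$ has conductor $r_i\CO_K$, since $r_i\equiv 1\bmod 4$, so $K(\sqrt{r_i})$ lies in the ray class field of $K$ modulo $r_i\CO_K$, and hence in $K(R\fq)$. Taking the compositum over $i$ shows $J_R\subset K(R\fq)$; since the Hilbert class field $H$ is also contained in $K(R\fq)$, we conclude $H_R=H\cdot J_R\subset K(R\fq)$. The twist statements are then immediate from the definition of the quadratic twist: since $d\mid R$ the element $\sqrt{d}$ is a product of the $\sqrt{r_i}$ for $r_i\mid d$, so $K(\sqrt{d})\subset J_R$ and $B^{(d)}$ becomes isomorphic to $B$ over $J_R$; similarly $H(\sqrt{d})\subset H_R$ yields $A^{(d)}\cong A$ over $H_R$.

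The only non-formal ingredient is the parity of $h$, which is what forces the filtrations by $2$-power degrees (coming from the $\sqrt{r_i}$) and by divisors of $h$ (coming from $H/K$) to be disjoint. Without this the intersection $J_R\cap H$ could strictly contain $K$, and the clean identity $[H_R:J_R]=h$ would fail; everything else is bookkeeping with Galois groups and conductors.
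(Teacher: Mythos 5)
Your proof is correct and follows essentially the same route as the paper's: you deduce $J_R\cap H=K$ from the coprimality of $2^k=[J_R:K]$ and the odd class number $h=[H:K]$, obtain $H_R\subset K(R\fq)$ from the conductors of the $\chi_{r_i}$ (together with $H\subset K(R\fq)$), and observe the twist isomorphisms hold because $\sqrt{d}\in J_R$. The only cosmetic difference is that you establish $[J_R:K]=2^k$ via linear disjointness of $\BQ(\sqrt{r_1},\ldots,\sqrt{r_k})$ with the imaginary field $K$ over $\BQ$, whereas the paper simply cites Kummer theory over $K$; both are fine, and yours spells out slightly more of the bookkeeping.
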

\begin{proof} Since the class number $h$ is odd, and $[J_R:K] = 2^k$ by Kummer theory, it follows that $J_R \cap H = K$. Moreover, the extension $K(\sqrt{r_i})/K$ has
conductor $r_i\CO_K$ since $r_i \equiv 1 \mod 4$, and thus this extension is a subfield of $K(R\fq)$. The final assertion of the lemma is also clear since $J_R$ contains $\sqrt{d}$.
\end{proof}

It follows from \eqref{5} that, for each divisor $d$ of $R$, the partial $L$-value $\sqrt{d}L_R(\ov{\phi}_d,\gamma_\fa,1)/\Omega_\infty$ belongs to the compositum of fields $HT$.  Recall that $\fP$ is a degree one ideal of $T$ above $\fp$. We fix a prime $w$ of $H_RT$ lying above the prime $\fP$ of $T$. Assume now that $R$ is fixed. For each $n \geq 0$, we let
$\fC_n$ be a set of integral ideals of $K$, prime to $R\fq$, whose Artin symbols in $\Gal(H_R(A_{\fp^{n+2}})/K))$ give precisely $\Gal(H_R(A_{\fp^{n+2}})/J_R(B_{\fP^{n+2}}))$; here, for each integer $m \geq 1$, $B_{\fP^m}$ (resp. $B^{(d)}_{\fP^m}$) denotes the Galois module of $\fP^m$-division points on the abelian variety $B$ (resp. $B^{(d)}$).
One sees easily that, for each $n \geq 0$, $\fC_n$ gives a complete set of representatives of the ideal class group of $K$. Moreover, since $J_R(B_{\fP^{n+2}}) = J_R(B^{(d)}_{\fP^{n+2}})$, we conclude that
\begin{equation}\label{7}
\phi_d(\fa) \equiv 1 \mod \fP^{n+2} \, \, \, \, {\rm for \, all}  \, \, \fa \in \fC_n.
\end{equation}
\noindent Note that, for any lattice $\fL$ and any $\lambda\neq 0\in\BC$, we have $\CE^*_1(\lambda z,\lambda \fL)=\lambda^{-1}\CE^*_1(z,\fL).$
Hence, summing the formula \eqref{5} over all $\fa\in \fC_n$, and taking $\lambda = 1/ (\sqrt{d}\chi_d(\fa))$ and $\fL = \chi_d(\fa)\CL_\fa = \CL_\fa$, we immediately obtain the equation
\begin{equation}\label{8}
  \sum_{\fa\in \fC_n}\phi_d(\fa)L_R(\ov{\phi}_d,\gamma_\fa,1)/\Omega_\infty=\sum_{\fa\in\fC_n} \xi(\fa)\sum_{\sigma\in\Gal(K(\fq R)/H)}\left(\sqrt{d}\right)^{\sigma-1}\frac{1}{R\sqrt{-q}}\CE^*_1\left(\frac{\xi(\fa)\Omega_\infty}{R\sqrt{-q}},\CL_\fa\right)^\sigma.
\end{equation}
Now the values $L_R(\ov{\phi}_d,\gamma_\fa,1)/\Omega_\infty$ are independent of $n$ since $\fC_n$ is a complete set of representatives of the ideal class group of K. Thus, we conclude from \eqref{7} that the left hand side of \eqref{8} converges $w$-adically as $n \to \infty$ to $ L_R(\ov{\phi}_d,1)/\Omega_\infty$, assuming $R$ is fixed. Therefore, the right hand side of \eqref{8}  also converges $w$-adically as $n\to \infty$, and so we have proven the following result.

\begin{prop}\label{special-val}
For every positive integer divisor $d$ of $R$, we have
\begin{equation}\label{9}
 L_R(\ov{\phi}_d,1)/\Omega_\infty =\lim_{n\ra \infty}\sum_{\fa\in\fC_n}\xi(\fa)\sum_{\sigma\in\Gal(K(R\fq)/H)}\left(\sqrt{d}\right)^{\sigma-1}\frac{1}{R\sqrt{-q}}\CE^*_1\left(\frac{\xi(\fa)\Omega_\infty}{R\sqrt{-q}},\CL_\fa\right)^\sigma.
\end{equation}.
\end{prop}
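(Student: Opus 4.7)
The plan is to read off the proposition essentially as a $w$-adic limit of the identity already displayed as equation \eqref{8}. In more detail, I would first check that \eqref{8} itself is correctly derived from Proposition \ref{3.3n} by the following bookkeeping. Starting from \eqref{4}, substitute $\xi_d(\fa)=\xi(\fa)/\chi_d(\fa)$ and apply the homogeneity relation $\CE^*_1(\lambda z,\lambda\fL)=\lambda^{-1}\CE^*_1(z,\fL)$ with $\lambda=1/(\sqrt{d}\chi_d(\fa))$, using the fact that $\chi_d(\fa)=\pm1$ so that $\chi_d(\fa)\CL_\fa=\CL_\fa$. This replaces the lattice $\CL_\fa/\sqrt{d}$ inside $\CE^*_1$ by $\CL_\fa$, at the cost of a factor of $\sqrt{d}\chi_d(\fa)$ on the outside. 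After distributing the Galois trace $\textrm{Tr}_{K(R\fq)/H}$ and observing that $\sqrt{d}^\sigma/\sqrt{d}=(\sqrt{d})^{\sigma-1}$ while $\chi_d(\fa)$, $\xi(\fa)/R\sqrt{-q}$ and the value $\CE^*_1(\xi(\fa)\Omega_\infty/(R\sqrt{-q}),\CL_\fa)^\sigma$ combine into the summand appearing on the right of \eqref{8}, one gets \eqref{8} after summing over $\fa\in\fC_n$.

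Next I would analyze the left-hand side of \eqref{8} in the $w$-adic topology. Because $\fC_n$ is a complete set of representatives for the ideal class group of $K$, the \emph{un}weighted sum $\sum_{\fa\in\fC_n}L_R(\ov{\phi}_d,\gamma_\fa,1)/\Omega_\infty$ equals the full imprimitive value $L_R(\ov{\phi}_d,1)/\Omega_\infty$, independent of $n$ and of the specific choice of representatives. The difference between this and the weighted sum appearing in \eqref{8} is
\[
\sum_{\fa\in\fC_n}(\phi_d(\fa)-1)\,\frac{L_R(\ov{\phi}_d,\gamma_\fa,1)}{\Omega_\infty}.
\]
By the congruence \eqref{7}, every factor $\phi_d(\fa)-1$ lies in $\fP^{n+2}$, so its image at the place $w$ tends to $0$ as $n\to\infty$ (the number of summands is the fixed finite integer $h$, and each partial $L$-value is a fixed element of $HT$). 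Hence the left-hand side of \eqref{8} converges $w$-adically to $L_R(\ov{\phi}_d,1)/\Omega_\infty$.

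Finally, since equation \eqref{8} is an identity valid for every $n\geq 0$, the right-hand side of \eqref{8} must also converge $w$-adically to the same limit, which is exactly the formula \eqref{9} in the statement of Proposition \ref{special-val}. The only genuinely nontrivial ingredient here is the congruence \eqref{7}: it relies on Lemma \ref{s1} (the existence of a degree-one prime $\fP$ of $T$ above $\fp$, so that $B_{\fP^m}$ makes sense as a cyclic torsion module), and on the identification $J_R(B_{\fP^{n+2}})=J_R(B^{(d)}_{\fP^{n+2}})$ provided by Lemma \ref{3.4n}; these have already been established, so no further obstacle arises. The potential subtle point to double-check is the harmless cancellation of the $\chi_d(\fa)=\pm 1$ factors in going from \eqref{4} to \eqref{8}, which is purely a sign bookkeeping exercise.
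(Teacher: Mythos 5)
Your proposal is correct and follows essentially the same route as the paper: derive \eqref{8} from Proposition \ref{3.3n} via the homogeneity of $\CE^*_1$ and the cancellation of the $\chi_d(\fa)=\pm1$ factors, then observe that the unweighted sum over $\fC_n$ is the full imprimitive $L$-value while the weights $\phi_d(\fa)$ tend to $1$ $w$-adically by \eqref{7}. The only difference is that you spell out the bookkeeping more explicitly than the paper does, which is harmless.
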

\noindent One of the key idea in Zhao's induction method is to sum the formula \eqref{9} over all positive integer divisors $d$ of $R$, and make use of the following well known lemma.
\begin{lem}
 Recall that $R = r_1 \dots r_k$, where the $r_i$ are distinct prime numbers. Let  $\sigma$ be any element of $\Gal(K(R\fq)/H)$. Then, letting $d$ run over all positive integer divisors of $R$, the expression  $\sum_{d\mid R}(\sqrt{d})^{\sigma-1}$ is equal to $2^k$ if $\sigma \in \Gal(K(R\fq)/H_R)$, and is equal to $0$ otherwise.
\end{lem}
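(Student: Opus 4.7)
The plan is to reduce the sum over divisors to a product over the prime factors of $R$, and then interpret each factor as a character of $\Gal(K(R\fq)/H)$ restricted to the quadratic extension $K(\sqrt{r_i})/K$.

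First I would observe that for each $i = 1, \dots, k$, the element $\sqrt{r_i}$ generates the quadratic extension $K(\sqrt{r_i})/K$, which is contained in $K(R\fq)$ (by Lemma \ref{3.4n} and the conductor computation there). Therefore the quantity $(\sqrt{r_i})^{\sigma-1}$ is well defined and lies in $\{\pm 1\}$ for every $\sigma \in \Gal(K(R\fq)/K)$; moreover the map $\chi_i \colon \sigma \mapsto (\sqrt{r_i})^{\sigma-1}$ is the restriction to $\Gal(K(R\fq)/K)$ of the non-trivial quadratic character of $\Gal(K(\sqrt{r_i})/K)$.

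Next, since $R = r_1 \cdots r_k$ is squarefree, the positive divisors $d$ of $R$ are in bijection with subsets $S \subseteq \{1,\dots,k\}$ via $d = \prod_{i \in S} r_i$, and one can choose $\sqrt{d} = \prod_{i \in S} \sqrt{r_i}$ inside the field $J_R$. Consequently
\begin{equation*}
\sum_{d \mid R}(\sqrt{d})^{\sigma-1} \;=\; \sum_{S \subseteq \{1,\dots,k\}} \prod_{i \in S} \chi_i(\sigma) \;=\; \prod_{i=1}^{k} \bigl(1 + \chi_i(\sigma)\bigr).
\end{equation*}
Each factor $1 + \chi_i(\sigma)$ equals $2$ when $\chi_i(\sigma) = 1$ and $0$ when $\chi_i(\sigma) = -1$. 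Hence the whole product equals $2^k$ if and only if $\chi_i(\sigma) = 1$ for every $i$, and equals $0$ otherwise.

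Finally I would translate the condition $\chi_i(\sigma) = 1$ for all $i$ into a Galois-theoretic statement. Since $\chi_i(\sigma) = 1$ is equivalent to $\sigma$ fixing $\sqrt{r_i}$, the simultaneous vanishing condition says precisely that $\sigma$ fixes the compositum $J_R = K(\sqrt{r_1}, \dots, \sqrt{r_k})$. As $\sigma$ already fixes $H$ by hypothesis, and $H_R = H \cdot J_R$, this is equivalent to $\sigma \in \Gal(K(R\fq)/H_R)$. There is no real obstacle here; the only subtlety is the harmless check that the $\chi_i$ are independent quadratic characters on $\Gal(K(R\fq)/H)$, which follows from $[H_R : H] = 2^k$ (a consequence of $J_R \cap H = K$ and $[J_R:K] = 2^k$ from Lemma \ref{3.4n}), ensuring the cases $\sigma \in \Gal(K(R\fq)/H_R)$ and its complement both occur. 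This completes the proof.
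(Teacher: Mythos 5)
Your proof is correct, and it takes a route that is recognisably different in presentation from the paper's. You exploit the multiplicative factorization
$\sum_{d\mid R}(\sqrt d)^{\sigma-1}=\prod_{i=1}^k\bigl(1+\chi_i(\sigma)\bigr)$,
where $\chi_i(\sigma)=(\sqrt{r_i})^{\sigma-1}$ is the quadratic character attached to $K(\sqrt{r_i})/K$; each factor is $2$ or $0$, and the conclusion falls out immediately. The paper instead argues by a direct count: it lets $j\geq 1$ be the number of $\sqrt{r_i}$ negated by $\sigma$, and shows that exactly $2^{k-j}\bigl(\binom{j}{0}+\binom{j}{2}+\cdots\bigr)=2^{k-1}$ divisors $d$ have $\sqrt d$ fixed and exactly $2^{k-j}\bigl(\binom{j}{1}+\binom{j}{3}+\cdots\bigr)=2^{k-1}$ have $\sqrt d$ negated, so the signed sum cancels to $0$. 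The two computations are of course equivalent (expanding your product recovers exactly the paper's binomial count), but your version is slicker: it needs no binomial identities, and it makes visible that the statement is just the orthogonality relation for the product character $\prod_i\chi_i$ on the elementary abelian $2$-group $\Gal(H_R/H)$. One small bonus of your write-up is that you explicitly fix the compatible choice $\sqrt d=\prod_{i\in S}\sqrt{r_i}$ inside $J_R$, which the paper leaves implicit. The closing remark about the independence of the $\chi_i$ (via $[H_R:H]=2^k$) is not needed for the statement being proved — it would only matter if one wished to show both cases actually occur — but it is harmless.
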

\begin{proof} We quickly recall the proof. The first assertion of the lemma is clear. To prove the second assertion, suppose that $\sigma$ maps $j \geq 1$ elements of the set $\{\sqrt{r_1}, \dots, \sqrt{r_k}\}$ to minus themselves, and write $V(\sigma)$ for the subset consisting of all such elements. If $d$ is any positive integer divisor of $R$, then
$\sigma$ will fix $\sqrt{d}$ if and only if $d$ is a product of an even number of elements of $V(\sigma)$ with an arbitrary number of elements of the complement of $V(\sigma)$
in $\{\sqrt{r_1}, \dots, \sqrt{r_k}\}$. Thus the total number of $d$ such that $\sqrt{d}$ is fixed by $\sigma$ is equal to
$$
2^{k-j}((j, 0) + (j, 2) + (j, 4) + \dots) = 2^{k-1},
$$
where $(j, i)$ denotes the number of ways of choosing $i$ objects from a set of $j$ objects.  Similarly, the total number of $d$ such that $\sigma$ maps $\sqrt{d}$ to
$-\sqrt{d}$ is equal to
$$
2^{k-j}((j, 1) + (j, 3) + (j, 5) + \dots) = 2^{k-1},
$$
and the second assertion of the lemma is now clear.
\end{proof}
\noindent For each $\fa \in \fC_n$, define
\[\Psi_{\fa, R} =\textrm{Tr}_{K(R\fq)/H_R}\left(\frac{1}{R\sqrt{-q}}\cdot\CE^*_1\left(\frac{\xi(\fa)\Omega_\infty}{R\sqrt{-q}},\CL_\fa\right)\right).\]
 In view of the above lemma, Proposition \ref{special-val} can be rewritten as follows.
\begin{thm}\label{key-identity}
Letting $d$ runs over all positive integer divisors of $R$, we have
\begin{equation}\label{10}
  \sum_{d\mid R}L_R(\ov{\phi}_d,1)/\Omega_\infty=2^k \cdot \lim_{n\ra \infty}\sum_{\fa\in\fC_n} \xi(\fa)\Psi_{\fa, R}.
\end{equation}
\end{thm}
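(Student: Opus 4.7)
The plan is to start from Proposition \ref{special-val} applied to each positive divisor $d$ of $R$, sum the resulting identity over all such $d$, and then invoke the immediately preceding divisor-sum lemma to collapse the inner Galois sum.

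Concretely, since the set of positive divisors of $R$ is finite and the $w$-adic limit in Proposition \ref{special-val} exists for each individual $d$, one may interchange the sum over $d$ with the limit and with the finite sums over $\fa \in \fC_n$ and $\sigma \in \Gal(K(R\fq)/H)$. Observing further that the factor $\frac{1}{R\sqrt{-q}}\CE^*_1\left(\frac{\xi(\fa)\Omega_\infty}{R\sqrt{-q}},\CL_\fa\right)^\sigma$ is independent of $d$, this gives
\begin{equation*}
\sum_{d\mid R}\frac{L_R(\ov{\phi}_d,1)}{\Omega_\infty} = \lim_{n\ra\infty}\sum_{\fa\in\fC_n}\xi(\fa)\sum_{\sigma\in\Gal(K(R\fq)/H)}\left(\sum_{d\mid R}(\sqrt{d})^{\sigma-1}\right)\frac{1}{R\sqrt{-q}}\CE^*_1\left(\frac{\xi(\fa)\Omega_\infty}{R\sqrt{-q}},\CL_\fa\right)^\sigma.
\end{equation*}
By the preceding lemma, the innermost parenthetical sum equals $2^k$ when $\sigma$ lies in $\Gal(K(R\fq)/H_R)$ and vanishes otherwise. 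Thus the sum over $\sigma \in \Gal(K(R\fq)/H)$ collapses to a sum over $\sigma \in \Gal(K(R\fq)/H_R)$, with overall factor $2^k$.

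To finish, I note that $1/(R\sqrt{-q}) \in K$ is fixed by every element of $\Gal(K(R\fq)/K)$, so it can be pulled inside the trace without interfering with $\sigma$. The remaining sum over $\sigma \in \Gal(K(R\fq)/H_R)$ is then, by definition, the trace $\textrm{Tr}_{K(R\fq)/H_R}$ applied to $\frac{1}{R\sqrt{-q}}\CE^*_1\left(\frac{\xi(\fa)\Omega_\infty}{R\sqrt{-q}},\CL_\fa\right)$, which matches $\Psi_{\fa,R}$. Substituting yields \eqref{10}. The argument is essentially bookkeeping; the only subtle point is justifying the interchange of the sum over $d$ with the $w$-adic limit, which is immediate since the divisor set is finite and each summand's limit already exists by Proposition \ref{special-val}.
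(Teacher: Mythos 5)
Your proposal is correct and matches the paper's proof exactly: the paper itself deduces Theorem \ref{key-identity} from Proposition \ref{special-val} together with the preceding divisor-sum lemma, collapsing the Galois sum to the trace from $K(R\fq)$ to $H_R$, precisely as you do. Your extra remarks about interchanging the finite sum over $d$ with the $w$-adic limit, and about $1/(R\sqrt{-q})$ being fixed by $\Gal(K(R\fq)/K)$, are sound and merely spell out details the paper leaves implicit.
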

\noindent Finally, we shall need the following key integrality result (see \cite{C1}, \cite{CLTZ}).
\begin{prop}\label{inte-2}
For all $\fa\in\fC_n$, $\Psi_{\fa, R}$ is integral at all places of $H_R$ above $2$.
\end{prop}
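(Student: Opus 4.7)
The plan is to reduce the integrality claim to the classical fact that certain weight-one Eisenstein values at division points of order coprime to the residue characteristic are integral at primes of good ordinary reduction of the underlying CM elliptic curve. This is precisely the mechanism underlying the integrality step in Zhao's argument for $X_0(49)$ in \cite{CLTZ}, and in the earlier work \cite{C1}, so the task is essentially to verify that the present set-up fits into that framework.

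First I would identify the complex point $\tfrac{\xi(\fa)\Omega_\infty}{R\sqrt{-q}}$ modulo $\CL_\fa = \xi(\fa)\fa^{-1}\Omega_\infty$ as a nontrivial $R\fq$-division point on the complex elliptic curve $\BC/\CL_\fa \cong A^\fa(\BC)$; since $\fa$ is prime to $R\fq$ and both $R$ and $q$ are odd, the order of this point is an ideal prime to $2$. Next I would record that $A$ has minimal discriminant $-q^3$, and so every conjugate $A^\fa$ has good reduction at every prime of $H$ above $2$; as $2 = \fp\fp^*$ splits in $K$ and $A^\fa$ has CM by $\CO_K$, this reduction is in fact ordinary at each such prime.

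Then I would apply the standard integrality theorem for $\CE^*_1$: for any torsion point $t$ on $A^\fa$ of order prime to $2$, the value $\CE^*_1(t, \CL_\fa)$ is integral at every prime of $K(R\fq) = H(A^\fa_{R\fq})$ above $2$; this is the version of Robert's integrality theorem for weight-one Eisenstein values (equivalently, for logarithmic derivatives of elliptic units) at primes of good ordinary reduction used in \cite{C1} and \cite{CLTZ}. Since $1/(R\sqrt{-q})$ is a $2$-adic unit (both $R$ and $q$ being odd), the expression inside the trace in the definition of $\Psi_{\fa,R}$ is $2$-integral in $K(R\fq)$. Finally, the trace $\textrm{Tr}_{K(R\fq)/H_R}$ preserves algebraic integrality, giving the claim.

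The main obstacle is purely one of bookkeeping: one must check that the normalization of $\CE^*_1$ fixed by \eqref{3} matches the one for which integrality is stated in \cite{C1} and \cite{CLTZ}, so that no hidden power of $2$ enters as a denominator through the period or through the choice of lattice. One also wants to confirm that since $\fa \in \fC_n$ runs over a system of representatives prime to $R\fq$ and $\xi(\fa) \in H$, no additional $2$-adic denominators are introduced when the scalars $\xi(\fa)$ are brought inside the Eisenstein argument. Once these conventions are matched, the proposition follows without further analytic or arithmetic input beyond the cited integrality results.
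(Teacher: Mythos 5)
There is a genuine gap here: the ``standard integrality theorem for $\CE^*_1$'' that you invoke --- that $\CE^*_1(t,\CL_\fa)$ is automatically $2$-integral whenever $t$ is a torsion point of order prime to $2$ and the curve has good ordinary reduction above $2$ --- is not a theorem stated or used in \cite{C1} or \cite{CLTZ}, and I do not believe it is true term by term. What Robert's theorem and its relatives give is integrality of elliptic units (theta values), and via logarithmic differentiation this translates into integrality of certain \emph{combinations} of weight-one Eisenstein values, not of a single value $\CE^*_1(t,\CL_\fa)$. If your clean termwise statement were available, both the present proof and the argument in \cite{CLTZ} would be wildly overengineered; the elaborate trace they perform is not a convention-matching step, it is the proof.

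Concretely, the paper does \emph{not} take the trace from $K(R\fq)$ down to $H_R$ of a single, already-integral quantity. It first traces down to a specific index-two subfield $\fS$ of $K(R\fq)$ (the fixed field of the element $\epsilon$ corresponding to $-1 \in (\CO_K/R\CO_K)^\times$, which contains $H_R$ because each $r_i\equiv 1 \bmod 4$). Using B\'ezout to write $1 = \alpha R + \beta\sqrt{-q}$, the division point $\xi(\fa)\Omega_\infty/(R\sqrt{-q})$ is decomposed as $z_1 + z_2$ with $z_1$ a $\fq$-division point and $z_2$ an $R$-division point; the trace to $\fS$ then produces $\CE^*_1(z_1+z_2,\CL_\fa) + \CE^*_1(z_1-z_2,\CL_\fa)$. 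Only at this stage is integrality visible, via the classical addition formula (Lemma 4.3 of \cite{CLTZ}) rewriting that sum as $2\CE^*_1(z_1,\CL_\fa)$ plus the chord-slope expression $\bigl(2y(P_1)+a_1^\fa x(P_1)+a_3^\fa\bigr)/\bigl(x(P_1)-x(P_2)\bigr)$, each of which is checked to be integral at primes of $\fS$ above $2$ (the latter using good reduction at $2$ and the fact that $P_1$, $P_2$ have coprime odd orders so they do not collide mod $2$). Your proposal, by asserting termwise $2$-integrality of $\CE^*_1$ and then tracing, skips exactly the step that makes the proposition nontrivial; as written it does not constitute a proof.
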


\begin{proof}
We briefly recall the proof given in \cite{CLTZ}. Write $\fJ=H(A^\fa_\fq)$, which is also the ray class field $K(\fq)$.
Since $A^\fa$ is a relative Lubin-Tate formal group, in the sense of De Shalit \cite{DS85}, at each prime of H lying above the set of primes of $K$ dividing $R$, it is easily seen that the action of the  Galois group $\Gal(K(R\fq)/\fJ)$ on $A^\fa_R$ gives an isomorphism
\[\tau: \Gal(K(R\fq)/\fJ)\simeq \left(\CO_K/R\CO_K\right)^\times.\]
Since $q$ is prime to $R$, we can find $\alpha,\beta$ in $\CO_K$ such that $1=\alpha R+\beta \sqrt{-q}$.  We then define
\[z_1=\frac{\xi(\fa)\alpha\Omega_\infty}{\sqrt{-q}},\quad z_2=\frac{\xi(\fa)\beta\Omega_\infty}{R}.\]
and write $P_1$ and $P_2$ for the corresponding points on $A^\fa$ under the Weierstrass isomorphism.
For any $b \in H$, let $b^\fa =\gamma_\fa(b)$. Define $\epsilon$ to be the inverse image of the class $-1\mod R\CO_K$ under the isomorphism $\tau$, and let $\fS$ be the fixed field of $\epsilon$, so that the extension $K(R\fq)/\fS$ has degree $2$. Of course, $\fS$ contains $H_R$ because $-1$ is a square modulo $r_j$ for $j=1,\dots k$. Defining
$\Phi_\fa=\textrm{Tr}_{K(R\fq)/\fS}\left(\Psi_\fa\right)$, we have
\[\Phi_\fa=\frac{1}{R\sqrt{-q}}\cdot\left(\CE^*_1(z_1+z_2,\CL_\fa)+\CE^*_1(z_1-z_2,\CL_\fa)\right)\]
On the other hand, by a classical identity ( see Lemma 4.3 in \cite{CLTZ}), we have
\[\CE^*_1(z_1+z_2,\CL_\fa)+\CE^*_1(z_1-z_2,\CL_\fa)=2\CE^*_1(z,\CL)+\frac{2y(P_1)+a^\fa_1\cdot x(P_1)+a^\fa_3}{x(P_1)-x(P_2)}.\]
However, as is explained in detail in \cite{CLTZ}, each of the two terms on the right hand side of this last equation are integral at all places of $\fS$ lying above $2$, and the assertion of the lemma follows.
\end{proof}

\section{Zhao's method}

In this section, we will use Zhao's induction method to prove the following theorem. Let $R = r_1\ldots r_k$ be any element of the set $\fR$, so that $r_1,\ldots, r_k$ are distinct prime numbers which are inert in $K$ and $\equiv 1 \mod 4$.  Let  $\fP$ be the degree 2 prime of $T$ lying above $\fp$ whose existence is given by Lemma \ref{s1}, and we  write $\fD$ for any of the primes of the field $HT$ lying above $\fP$. We choose any embedding  $\iota: T \to \BC$ which extends our given embedding of $K$ into $\BC$. Since we have fixed an embedding of $H$ in $\BC$, we obtain an embedding of the field $HT$ in $\BC$ whose restriction to $T$ is the embedding $\iota$. Let $A$ be the Gross curve \eqref{mg}. As always, we write $B$ for the abelian variety over $K$ which is the restriction of scalars of $A$ from $H$ to $K$, and $\phi$ will be the Serre-Tate character of $B/K$, which we can view as a complex Grossencharacter thanks to our given embedding $\iota: T \to \BC$. For simplicity, we continue to write $\phi$, rather than $\phi^\iota$, for this complex Grossencharacter. Assuming $R \neq 1$, let $B^{(R)}$ be the twist of $B$ by the quadratic extension $K(\sqrt{R})/R$, and let $\phi_R$ be the Serre-Tate character of $B^{(R)}/K$, so that $\phi_R = \phi \chi_R$, where $\chi_R$ is the quadratic character of $K$ of the extension $K(\sqrt{R})/K$. Now it follows from Proposition \ref{3.3n} that the value $\sqrt{R}L(\ov{\phi}_R, 1)/\Omega_\infty$ belons to $HT$.

\begin{thm}\label{AR-nonzero}
Assume that $q\equiv 7\mod 16$, and let $R=r_1\cdots r_k$ be any element of the set $\fR$. Then, for each prime $\fD$ of $HT$ lying above the prime $\fP$ of $T$, we have
\begin{equation}\label{5.1}
  \ord_\fD(\sqrt{R}L(\ov{\phi}_R, 1)/\Omega_\infty)=k-1.
\end{equation}
In particular, $L(\ov{\phi}_R, 1) \neq 0$.
\end{thm}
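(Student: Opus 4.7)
The plan is to argue by induction on $k=\omega(R)$, the number of distinct prime factors of $R$. Since $R$ is odd, the factor $\sqrt R$ in the statement has trivial $\fD$-adic valuation after extending $\fD$ to a prime of $HT(\sqrt R)$, a step which is unramified above $2$, so throughout I work with $\ord_\fD(L(\ov\phi_R,1)/\Omega_\infty)$ and aim to show it equals $k-1$. When $k=0$, we have $R=1$ and the assertion reduces to $\ord_\fD(L(\ov\phi,1)/\Omega_\infty)=-1$, which is precisely the main non-vanishing theorem for the Gross curve established by $2$-adic Iwasawa theory for $B/K$ in our earlier paper \cite{CL}.

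\emph{Inductive step.} Assume the theorem for every proper divisor of $R$. Since $L_R(\ov\phi_R,1)=L(\ov\phi_R,1)$, Theorem \ref{key-identity} rearranges to
$$L(\ov\phi_R,1)/\Omega_\infty \;=\; 2^k\lim_n\sum_{\fa\in\fC_n}\xi(\fa)\Psi_{\fa,R} \;-\;\sum_{\substack{d\mid R\\ d\neq R}} L_R(\ov\phi_d,1)/\Omega_\infty.$$
As $H/K$ is unramified everywhere and $\fP/\fp$ is unramified of degree one (Lemma \ref{s1}), we have $\ord_\fD(2)=1$, so Proposition \ref{inte-2} forces the first term on the right into $\fD^k$. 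For each proper divisor $d$ I re-insert the missing Euler factors,
$$L_R(\ov\phi_d,1) \;=\; L(\ov\phi_d,1)\prod_{r\mid R/d}\bigl(1-\ov\phi_d((r))/r^2\bigr),$$
and observe that each $r$ is inert in $K$ and odd, so every rational integer is a square in $\BF_{r^2}$, giving $\chi_d((r))=1$. Combined with $\phi((r))\ov{\phi((r))}=r^2$, each Euler factor becomes $(\phi((r))-1)/\phi((r))$, whose $\fD$-adic valuation equals $\ord_\fP(\phi((r))-1)$. The key local assertion is $\ord_\fP(\phi((r))-1)=1$ for all such $r$; the bound $\ge 1$ is automatic from the residue field of $\fP$ being $\BF_2$, while the sharp statement $\phi((r))\not\equiv 1\pmod{\fP^2}$ is where I expect to invoke the delicate result from \cite{BG} already used for Lemma \ref{s1}. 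The inductive hypothesis then gives $\ord_\fD(L_R(\ov\phi_d,1)/\Omega_\infty)=(\omega(d)-1)+(k-\omega(d))=k-1$ for each $d\neq R$, and the easy direction $\ord_\fD(L(\ov\phi_R,1)/\Omega_\infty)\ge k-1$ follows at once.

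\emph{Main obstacle.} The harder inequality is the matching lower bound: preventing the leading terms of the summands $L_R(\ov\phi_d,1)/\Omega_\infty$ in $\fD^{k-1}/\fD^k$ from cancelling, which would illegitimately push the sum into $\fD^k$ and force $\ord_\fD(L(\ov\phi_R,1)/\Omega_\infty)\ge k$. To control these leading terms I would strengthen the induction so as to keep track of the explicit residue class of $L(\ov\phi_d,1)/\Omega_\infty$ in $\fD^{\omega(d)-1}/\fD^{\omega(d)}$, read off recursively by applying Theorem \ref{key-identity} to each $d$ in place of $R$ and reducing modulo $\fP$. The required non-cancellation for $R$ then becomes an explicit combinatorial identity modulo $\fP$ involving the base-case residue from \cite{CL}, the units $u_{r_i}:=(\phi((r_i))-1)/\pi_\fP\bmod\fP\in\BF_\fD^\times$, and the Artin symbols of the ideals $\fa\in\fC_n$ acting on the Eisenstein-series expression for $\Psi_{\fa,R}$; showing that this combinatorial expression is a nonzero element of $\BF_\fD$ is the heart of extending Zhao's argument from $q=7$ to general $q\equiv 7\pmod{16}$. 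Once it is secured, we conclude $\ord_\fD(L(\ov\phi_R,1)/\Omega_\infty)=k-1$, whence the theorem, and the non-vanishing $L(\ov\phi_R,1)\neq 0$ drops out at once.
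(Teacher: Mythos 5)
Your overall skeleton — induction on $k$, base case from the $p=2$ Iwasawa theory of $B/K$ in \cite{CL}, Theorem \ref{key-identity} plus Proposition \ref{inte-2} for the integrality of the Eisenstein sums, and the computation $\ord_\fP(1-\ov\phi_d((r))/r^2)=1$ from $\phi_d((r))=-r$ with $r\equiv 1\bmod 4$ — matches the paper. You have also put your finger on exactly the right obstacle: the summands indexed by proper divisors $d$ a priori live in $H_RT$, whose residue field at a prime above $\fP$ can be large, so their leading coefficients could conspire to cancel. But your proposed remedy (strengthen the induction to track explicit residue classes and then verify a ``combinatorial identity mod $\fP$'' is nonzero) is not what makes the argument close, and as stated it is unlikely to: you would be chasing leading units in a residue field of unbounded size over $\BF_2$ with no mechanism to control them.

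The device you are missing is to divide the whole identity \eqref{10} by the nonzero quantity $\msl=L(\ov\phi,1)/\Omega_\infty$. The crucial new input is Theorem \ref{4.7}: for every $d\mid R$ the ratio $\msl(d)/\msl$ lies in $T$, not merely in $HT$. (This is the delicate result from \cite{BG} that the paper relies on — Proposition 11.1 there, giving $\tau(\msl(d))=e_{\tau_H}\msl(d)$ with a cocycle $e_{\tau_H}$ independent of $d$ — not, as you speculated, the formula $\phi((r))=-r$, which is an easy explicit computation.) Consequently each $\Phi(d,R)=\Lambda(d,R)/\msl$ lands in $T_\fP$, where $\fP$ has residue field $\BF_2$. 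Once there, every unit is $\equiv 1\bmod\fP$, so the inductive hypothesis pins down each term $\Phi(d,R)/\sqrt d$ for $d\neq 1,R$ to be $\equiv \sqrt d\,\pi_\fP^k\bmod\fW^{k+1}$, and their sum is $\pi_\fP^k D_R$ with $D_R=\sum_{d\mid R,\,d\neq 1,R}\sqrt d$. Then $D_R^2\equiv 2^k-2\equiv 0\bmod\fW$, so $\ord_\fW(D_R)\ge 1$ and the whole middle sum has $\ord_\fW\ge k+1$. The term that actually has exact order $k$ is the $d=1$ contribution $\prod_i(1-\ov\phi((r_i))/r_i^2)$, and with $\ord_\fW(2^kV_R/\msl)\ge k+1$ this forces $\ord_\fW(\Phi(R)/\sqrt R)=k$, hence $\ord_\fD(\msl(R))=k-1$. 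In short: the intermediate terms do cancel to higher order (that is a feature, not a threat), the $d=1$ term carries the exact valuation, and the whole bookkeeping is made trivial by Theorem \ref{4.7}, which is the step your proposal lacks.
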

\noindent When $R \neq 1$, we write $A^{(R)}/H$ for the twist of $A/H$ by the quadratic extension $H(\sqrt{R})/H$, and let $L(A^{(R)}/H, s)$ be its complex $L$-series. Letting $\iota$ range over all $h$ distinct embeddings of $T$ into $\BC$ extending the embedding of $K$ into $\BC$, we immediately obtain the following corollary.

\begin{cor} Assume that $q\equiv 7\mod 16$, and let $R$ be any element of $\fR$. Then $L(A^{(R)}/H, 1) \neq 0.$
\end{cor}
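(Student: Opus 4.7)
The plan is to prove Theorem~\ref{AR-nonzero} by induction on $k=\omega(R)$, the number of distinct prime factors of $R$. The base case is $k=0$, in which $R=1$ and the claim reduces to $\ord_\fD(L(\ov{\phi},1)/\Omega_\infty)=-1$. I would extract this directly from Proposition~\ref{4.4n}, whose proof rests on the Iwasawa theory at $p=2$ for $B/K$ developed in \cite{CL}; the hypothesis $q\equiv 7\pmod{16}$ enters the argument at exactly this step, so the serious $L$-adic content for a single prime already sits at the anchor of the induction.

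For the inductive step, fix $R=r_1\cdots r_k\in\fR$ with $k\geq 1$ and assume the theorem for all elements of $\fR$ with fewer than $k$ prime factors. Choose a prime $\fD'$ of $H_RT$ above $\fD$; since each $r_i$ is odd, $H_RT/HT$ is unramified at $\fD$, so $\ord_{\fD'}$ agrees with $\ord_\fD$ on $HT$. Combining Theorem~\ref{key-identity} with the $2$-integrality of $\Psi_{\fa,R}$ from Proposition~\ref{inte-2}, together with the fact that $\ord_{\fD'}(2)=1$ (a consequence of $\fP$ being degree one above $\fp$ and of $H/K$ being unramified), I would deduce
\begin{equation*}
\sum_{d\mid R}L_R(\ov{\phi}_d,1)/\Omega_\infty \;\equiv\; 0 \pmod{(\fD')^{k}}.
\end{equation*}
I then unfold each imprimitive value via
\begin{equation*}
L_R(\ov{\phi}_d,1)\;=\;L(\ov{\phi}_d,1)\cdot\prod_{r\mid R/d}\bigl(1-\ov{\phi}(\fr)\chi_d(\fr)/r^{2}\bigr),\qquad \fr=r\CO_K,
\end{equation*}
and observe that $\mst/\fP\cong\BF_2$ forces every Euler factor to lie in $\fP$: indeed $\phi(\fr)$ is a $\fP$-adic unit with $\phi(\fr)\equiv 1\pmod\fP$, while $\chi_d(\fr)=\pm 1\equiv 1\pmod\fP$ and $r^{2}\equiv 1\pmod\fP$. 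Together with the inductive hypothesis $\ord_{\fD'}(L(\ov{\phi}_d,1)/\Omega_\infty)=\omega(d)-1$ for each proper divisor $d$ of $R$ (noting that $\sqrt d$ is a $\fD'$-adic unit), this produces the uniform bound $\ord_{\fD'}(L_R(\ov{\phi}_d,1)/\Omega_\infty)\geq k-1$ for every $d\mid R$.

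The main obstacle is to upgrade this crude lower bound into the exact equality $\ord_\fD(L(\ov{\phi}_R,1)/\Omega_\infty)=k-1$ for the remaining term $d=R$. My plan is to invoke the delicate result of \cite{BG}, which describes the action of Frobenius at inert primes on the two-adic Tate module of $B$ with enough precision to pin down $\phi(\fr)$ modulo $\fP^{2}$. Substituting this refined information into each Euler factor and summing the $d\neq R$ contributions modulo $(\fD')^{k}$, the inductive hypothesis should yield an explicit nonzero residue class; isolating the $d=R$ term in the congruence above then forces $\ord_{\fD'}(L(\ov{\phi}_R,1)/\Omega_\infty)=k-1$ exactly. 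Since $\sqrt R$ is a $\fD'$-adic unit, this gives $\ord_\fD(\sqrt R\,L(\ov{\phi}_R,1)/\Omega_\infty)=k-1$ and hence $L(\ov{\phi}_R,1)\neq 0$; letting $\iota$ vary over all $h$ embeddings $T\hookrightarrow\BC$ then yields, via Deuring's factorization, the corollary that $L(A^{(R)}/H,1)\neq 0$.
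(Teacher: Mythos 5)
Your proposal has the right skeleton: Zhao's induction on $k=\omega(R)$, Proposition~\ref{4.4n} as the anchor at $R=1$, the combination of Theorem~\ref{key-identity} with Proposition~\ref{inte-2}, the unfolding of the imprimitive values by removing Euler factors at primes dividing $R/d$, and the valuation computation showing each removed Euler factor lies in $\fP$. All of this matches the paper. But there is a genuine gap in the inductive step, and the way you propose to fill it misidentifies both the obstruction and the cited tool.

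You say you will invoke ``the delicate result of \cite{BG}'' to pin down $\phi(\fr)$ modulo $\fP^2$ at inert primes $\fr=r\CO_K$. That is not needed and does not help: Lemma~\ref{4.3} gives $\phi(\fr)=-r$ \emph{exactly}, so the removed Euler factor is $1+1/r=(r+1)/r$ with $\ord_\fW$ equal to $1$ exactly, nothing to sharpen there. The real obstruction is elsewhere. After removing the $d=R$ term, equation~\eqref{4.9} presents $\msl(R)/\sqrt{R}$ as (minus) a sum of $2^k-1$ terms, each of valuation exactly $k-1$ at a prime $\fW$ of $H_RT$ above $\fP$, plus a term of valuation $\geq k$. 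To conclude $\ord_\fW(\msl(R)/\sqrt{R})=k-1$ you must show that those $2^k-1$ leading coefficients do not all cancel. But those coefficients live in the residue field $\kappa(\fW)$, whose degree over $\BF_2=\kappa(\fP)$ can be as large as $h$ (for instance $\BF_8$ when $q=23$), because each $\msl(d)$ lies a priori only in $HT$. The inductive hypothesis gives you valuations, not residues, in this large field, so ``summing the $d\neq R$ contributions and obtaining an explicit nonzero residue class'' is precisely the step you cannot carry out from what you have. This is exactly what the paper flags as the failure of the ``naive'' induction for $k\geq 2$.

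The paper's resolution, which your proposal misses, is to divide equation~\eqref{4.9} through by the nonzero number $\msl$ and to invoke Theorem~\ref{4.7}: the ratios $\msl(d)/\msl$ lie in $T$, not merely in $HT$. (This is where the ``delicate result'' of \cite{BG}, Proposition~11.1 there, actually enters; it is the transformation law $\tau(\msl(d))=e_{\tau_H}\msl(d)$ for $\tau\in\Gal(HT/T)$, with $e_{\tau_H}$ independent of $d$, not a statement about Frobenius on a Tate module.) Once $\Phi(d,R)=\Lambda(d,R)/\msl\in T$, its leading coefficient at $\fP$ lies in $\kappa(\fP)=\BF_2$, so $\Phi(d,R)/\sqrt{d}=\sqrt{d}\,\pi_\fP^k(1+\pi_\fP b_d)$, the sum over $1<d<R$ reduces modulo $\fW^{k+1}$ to $\pi_\fP^k D_R$ with $D_R=\sum_{d\mid R,\,d\neq 1,R}\sqrt{d}$, and the elementary observation $D_R^2\equiv\sum d\equiv 2^k-2\equiv 0\pmod{\fW}$ forces $\ord_\fW(D_R)\geq 1$. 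That extra unit of valuation is what isolates the $d=1$ contribution $\prod_i(1+1/r_i)$ of exact valuation $k$, and hence pins $\ord_\fW(\Phi(R)/\sqrt{R})=k$, giving $\ord_\fD(\msl(R))=k-1$. Without the passage to $T$ via $\msl$, your plan stalls at the residue-field difficulty.

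Finally, the deduction of the Corollary from Theorem~\ref{AR-nonzero} by letting $\iota$ range over the $h$ embeddings $T\hookrightarrow\BC$ extending the embedding of $K$ is correct and is the same as the paper's.
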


Prior to giving the proof of Theorem \ref{AR-nonzero}, we establish some preliminary lemmas.  Recall that $2\CO_K = \fp\fp^*$, and, as in \cite{CL}, let $F = K(B_{\fP^2})$.  Then $\fp$ and $\fq$
are the only two primes of $K$ which are ramified in the extension $F/K$. We thank Zhibin Liang for pointing out the following result to us.

\begin{lem}\label{4.1} If $q \equiv 7 \mod 16$, then $\fp^*$ is inert in $F$, and if $q \equiv 15 \mod 16$, then $\fp^*$ splits in $F$.
 \end{lem}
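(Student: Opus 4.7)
The plan is to reduce the splitting of $\fp^*$ in $F/K$ to a $2$-adic computation of $\phi(\fp^*)$ modulo $\fP^2$, using complex multiplication.

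First, I would describe the Galois structure of $F/K$. Since $\fP$ is an unramified degree-one prime of $\mst$ above $\fp$, the completion $\mst_\fP$ equals $\BZ_2$ and $\mst/\fP^2 \cong \BZ/4\BZ$. By the theory of complex multiplication, $B_{\fP^2}$ is a free rank-one module over $\mst/\fP^2$, so $\Gal(F/K)$ embeds into $(\mst/\fP^2)^\times \cong \{\pm 1\}$. The fact that the $\fP$-adic Galois representation of $B$ is ramified at $\fp$ but unramified at $\fp^*$ (established in \cite{CL}) forces $[F:K] = 2$ and shows $F/K$ is unramified at $\fp^*$, so the splitting of $\fp^*$ is governed entirely by its Frobenius.

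Second, by the standard CM dictionary the Frobenius of $\fp^*$ in $\Gal(F/K)$ is the class of $\phi(\fp^*) \in \mst$ in $(\mst/\fP^2)^\times$. Setting $\alpha = \phi(\fp^*)$, the norm identity $\alpha\overline{\alpha} = N(\fp^*) = 2$, together with the fact that $(\alpha)$ lies above $\fp^*$ and hence $\fP \nmid (\alpha)$, forces $\alpha \in \BZ_2^\times$ and $\overline{\alpha} \in 2\BZ_2$ in $\mst_\fP = \BZ_2$. Writing $\alpha = 1 + 2\epsilon$ and $a = \alpha + \overline{\alpha}$ for the trace, a short 2-adic expansion of $\overline{\alpha} = 2/(1+2\epsilon)$ gives $a \equiv 3 + 2\epsilon \pmod 4$, so the criterion becomes: $\fp^*$ splits in $F$ iff $\alpha \equiv 1 \pmod 4$ iff $a \equiv 3 \pmod 4$, and is inert iff $a \equiv 1 \pmod 4$.

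Third, I would determine $a \pmod 4$ as a function of $q \pmod{16}$; I expect this to be the main obstacle, since $\fp^*$ is generally not principal in $\CO_K$ (it has order $h$ in the ideal class group), so $\phi(\fp^*)$ is not simply the image of a known element. The key 2-adic input is the expansion of $\sqrt{-q}$ in the embedding $K \hookrightarrow K_\fp = \BZ_2$ determined by $\fp = (2,(1+\sqrt{-q})/2)$: starting from $(1+\sqrt{-q})/2 \in \fp$ and $(\sqrt{-q})^2 = -q$, one obtains $\sqrt{-q} \equiv 3 \pmod 8$ when $q \equiv 7 \pmod{16}$ and $\sqrt{-q} \equiv -1 \pmod 8$ when $q \equiv 15 \pmod{16}$. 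Combining this with a careful normalisation of $\phi(\fp^*)$ as the generator of the appropriate prime of $\mst$ above $\fp^*$ satisfying the Grossencharacter congruence modulo $\fq = (\sqrt{-q})$ — most naturally obtained via the relation $\psi = \phi \circ N_{H/K}$ and Gross's explicit description of $\psi$ on the curve $A/H$ — one reads off $a \equiv 1 \pmod 4$ in the first case (so $\fp^*$ is inert) and $a \equiv 3 \pmod 4$ in the second (so $\fp^*$ splits), yielding the lemma.
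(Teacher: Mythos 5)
Your outline is conceptually reasonable, but it leaves a genuine gap exactly where you flag ``the main obstacle.'' You correctly reduce the lemma to showing that $\phi(\fp^*) \not\equiv 1 \pmod{\fP^2}$ when $q\equiv 7 \pmod{16}$ and $\phi(\fp^*) \equiv 1 \pmod{\fP^2}$ when $q\equiv 15 \pmod{16}$, and your $2$-adic manipulation of $\alpha = \phi(\fp^*)$, $\overline\alpha = 2/\alpha$ in $T_\fP = \BQ_2$ is fine. But step three --- actually pinning down $\phi(\fp^*) \bmod \fP^2$ --- is the entire content of the lemma, and you do not carry it out: you observe that $\fp^*$ is not principal so $\phi(\fp^*)$ cannot be read off directly, and then assert that ``a careful normalisation'' via $\psi = \phi\circ N_{H/K}$ and Gross's formulas will produce the answer. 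That chain is substantially harder than it looks (one must track a prime of $H$ above $\fp^*$, its norm to $K$, the residue degree in $H/K$, and the explicit Hecke character on $A/H$), and nothing in your write-up does the computation.

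The paper avoids this entirely by using a much more economical piece of input: Lemma 7.11 of \cite{CL} gives the explicit generator $F = K(\sqrt{-\alpha})$ where $\alpha = \sqrt{-q}$ is normalised so that $\ord_\fp((1-\alpha)/2) > 0$. Once you know $F$ is the splitting field of $g(X) = X^2 + X + (\alpha+1)/4$ over $K$, the question of whether $\fp^*$ splits or is inert is just the question of whether $g(X) \bmod \fp^*$ factors over $\BF_2$. The identities $(1+\alpha)(1-\alpha)/4 = (q+1)/4$ and $(1+\alpha)/2 + (1-\alpha)/2 = 1$ show directly that $(\alpha+1)/4 \equiv 1 \pmod{\fp^*}$ when $q\equiv 7\pmod{16}$ (so $g \equiv X^2+X+1$, irreducible, $\fp^*$ inert) and $(\alpha+1)/4 \equiv 0 \pmod{\fp^*}$ when $q\equiv 15\pmod{16}$ (so $g\equiv X^2+X$, split). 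No computation of $\phi(\fp^*)$ is needed at all. If you want to repair your proposal, the cleanest fix is to import $F = K(\sqrt{-\alpha})$ and switch from ``Frobenius $= \phi(\fp^*)$ in $(\mst/\fP^2)^\times$'' to ``Frobenius is determined by the factorisation of a defining quadratic mod $\fp^*$.''
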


\begin{proof} As in \cite{CL}, we fix the sign of $\alpha = \sqrt{-q}$ so that $\ord_\fp((1-\alpha)/2) > 0$. Then, by Lemma 7.11 of \cite{CL}, we have $F = K(\sqrt{-\alpha})$. Thus
$F$ is the splitting field over $K$ of the polynomial $g(X) = X^2 + X + (\alpha + 1)/4$. Noting that
$$
(1+\alpha)(1-\alpha)/4 = (q+1)/4, \, \, \,  (1+\alpha)/2 + (1-\alpha)/2 = 1,
$$
it follows easily that $g(X)$ modulo $\fp^*$ is equal to $X^2 + X + 1$ if $q \equiv 7 \mod 16$, and it is equal to $X^2+ X$ if $q \equiv 15 \mod 16$. The assertions of the lemma
now follow easily.
\end{proof}

\begin{cor}\label{4.2n} If $q \equiv 7 \mod 16$, then $\ord_\fP(\phi(\fp^*) - 1) = 1$, and if $q \equiv 15 \mod 16$, then $\ord_\fP(\phi(\fp^*) - 1) \geq 2$. \end{cor}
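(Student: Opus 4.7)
The plan is to apply the main theorem of complex multiplication in the form: for any integral ideal $\fa$ of $K$ coprime to the conductor $\fq$ of $\phi$ and to $\fP$, the Artin symbol $\sigma_\fa \in \Gal(K(B_{\fP^n})/K)$ acts on $B_{\fP^n}$ via multiplication by $\phi(\fa) \in (\mst/\fP^n)^\times$. In particular, $\fa$ splits completely in $K(B_{\fP^n})$ if and only if $\phi(\fa) \equiv 1 \mod \fP^n$. I would apply this with $\fa = \fp^*$, which is coprime to $\fq$ (since $(2,q)=1$) and to $\fP$ (since $\fp \ne \fp^*$), taking $n=1$ and $n=2$.

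For $n=1$, the residue field $\mst/\fP$ equals $\BF_2$ since $\fP$ has degree one over $2$, so the unit group $(\mst/\fP)^\times$ is trivial and $\phi(\fp^*) \equiv 1 \mod \fP$ holds unconditionally, giving $\ord_\fP(\phi(\fp^*) - 1) \geq 1$ in all cases. For $n=2$, I would identify $\Gal(F/K)$ with $(\mst/\fP^2)^\times$. The injection $\Gal(F/K) \hookrightarrow (\mst/\fP^2)^\times$ comes from the $\mst$-action on $B_{\fP^2}$, which is free of rank one over $\mst/\fP^2$ because $\mst$ has index prime to $2$ in $\CO_T$ and the $\fP$-adic Tate module of $B$ is free of rank one over the corresponding local ring. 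Since $|(\mst/\fP^2)^\times| = 2$ and $F/K$ is nontrivial (indeed $\fq$ ramifies in $F$, by the remarks preceding Lemma \ref{4.1}), this injection is forced to be an isomorphism. Consequently, $\phi(\fp^*) \equiv 1 \mod \fP^2$ if and only if $\fp^*$ splits completely in $F$.

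The corollary then drops out directly from Lemma \ref{4.1}: when $q \equiv 7 \mod 16$, $\fp^*$ is inert in $F$, so $\phi(\fp^*) \not\equiv 1 \mod \fP^2$, which combined with the mod $\fP$ assertion forces $\ord_\fP(\phi(\fp^*)-1) = 1$; when $q \equiv 15 \mod 16$, $\fp^*$ splits in $F$, giving $\phi(\fp^*) \equiv 1 \mod \fP^2$ and hence $\ord_\fP(\phi(\fp^*)-1) \geq 2$. I do not anticipate any serious obstacle here, as the argument is entirely formal once one invokes CM reciprocity and exploits that $\mst/\fP$ has only two elements; the mildest subtlety is the identification $\Gal(F/K) = (\mst/\fP^2)^\times$, which uses both the free rank-one structure of the $\fP$-adic Tate module and the nontriviality of $F/K$.
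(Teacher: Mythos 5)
Your proof is correct and follows essentially the same route as the paper: both use the Serre--Tate relation that the Artin symbol of $\fp^*$ acts on $B_{\fP^2}$ as multiplication by $\phi(\fp^*)$, and then read off the $\fP$-adic valuation of $\phi(\fp^*)-1$ from the splitting behaviour of $\fp^*$ in $F$ established in Lemma~\ref{4.1}. The paper states this more tersely, skipping the explicit identification $\Gal(F/K)\cong(\mst/\fP^2)^\times$; that isomorphism is harmless but unnecessary, since the injection $\Gal(F/K)\hookrightarrow\Aut(B_{\fP^2})$ already suffices to conclude that the Artin symbol is trivial iff $\phi(\fp^*)\equiv 1\bmod\fP^2$.
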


\begin{proof} The prime $\fp^*$ is unramified in the extension $F/K$, and we let $\tau$ be its Artin symbol. Since $\phi$ is the Serre-Tate homomorphism for $B/K$, we have
$\tau(Q) = \phi(\fp^*)(Q)$ for all $Q$ in $B_{\fP^2}$, whence the assertion of the corollary follows from the previous lemma.
\end{proof}

\begin{lem}\label{4.3} Let $d$ be any positive divisor of $R$, and let $r$ be any prime dividing $R$ with $(r, d) = 1$. Then $\phi_d(r\CO_K) = - r$. \end{lem}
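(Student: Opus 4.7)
The plan is to factor $\phi_d = \phi\chi_d$ and evaluate each factor at $r\CO_K$ separately.

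First I would handle the easy factor $\chi_d(r\CO_K)$. The extension $K(\sqrt{d})/K$ has conductor $d\CO_K$ (by the proof of Lemma \ref{3.1}), and since $(r,d)=1$, the prime $r\CO_K$ is unramified in this extension; its splitting is determined by whether $d$ is a square in the residue field $\BF_{r^2}$ of $r\CO_K$. Now $\BF_r^\times$ sits as a cyclic subgroup of $\BF_{r^2}^\times$ of even index $r+1$, so it is contained in the unique index-two subgroup $(\BF_{r^2}^\times)^2$; hence every element of $\BF_r^\times$, and in particular $d$, is a square in $\BF_{r^2}$. Therefore $r\CO_K$ splits in $K(\sqrt{d})$, and $\chi_d(r\CO_K) = 1$.

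The main content is then to show $\phi(r\CO_K) = -r$. Writing $\phi((\alpha)) = \alpha\cdot\eta(\alpha\bmod\fq)$ for $\alpha\in\CO_K$ coprime to $\fq$, where $\eta:(\CO_K/\fq)^\times \cong \BF_q^\times \to T^\times$ is the finite-order character extending $\phi$ off the principal ideals with $\alpha\equiv 1 \bmod \fq$, the consistency condition at units (noting $\phi((-1)) = \phi(\CO_K) = 1$) gives $\eta(-1) = -1$. The key claim is that the restriction of $\eta$ to the image of $(\BZ/q)^\times$ in $\BF_q^\times$ (which is all of $\BF_q^\times$) lands in $\{\pm 1\} \subset T^\times$, and is therefore the nontrivial quadratic character, namely the Legendre symbol $\left(\frac{\cdot}{q}\right)$. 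Granted this, quadratic reciprocity finishes the argument: since $r\equiv 1\bmod 4$, we have $\left(\frac{r}{q}\right) = \left(\frac{q}{r}\right)$; since $r$ is inert in $K$ and $\left(\frac{-1}{r}\right) = 1$, we have $\left(\frac{-q}{r}\right) = -1$, forcing $\left(\frac{q}{r}\right) = -1$. Thus $\eta(r) = -1$, $\phi(r\CO_K) = -r$, and combining with the first step yields $\phi_d(r\CO_K) = -r$.

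The main obstacle is justifying the key claim that $\eta|_{\BF_q^\times}$ is $\{\pm 1\}$-valued. The cleanest route is to invoke Gross's explicit description of the Serre-Tate character from \cite{Gross82}, which identifies the restriction of $\eta$ to rational integers with the Legendre symbol directly. A more intrinsic argument runs as follows: because the Weil restriction $\Res_{K/\BQ}(B) = \Res_{H/\BQ}(A)$ is defined over $\BQ$, the Hecke character $\phi$ enjoys the symmetry $\phi^{c_K} = \ov{\phi}$, where $c_K$ denotes complex conjugation on $K$ and the bar is complex conjugation in the CM field $T$. Applied to the $c_K$-invariant ideal $r\CO_K$, this yields $\phi(r\CO_K) = \ov{\phi(r\CO_K)}$; combined with $\phi(r\CO_K)\cdot\ov{\phi(r\CO_K)} = r^2$, it forces $\phi(r\CO_K) = \pm r$, pinning $\eta(r)$ to $\{\pm 1\}$ as required.
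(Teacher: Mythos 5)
Your proof is correct and reaches the same conclusion, but both halves run along slightly different tracks from the paper's. For the factor $\chi_d(r\CO_K)=1$, you argue via residue fields: $r\CO_K$ has residue field $\BF_{r^2}$, and since $\BF_r^\times = (\BF_{r^2}^\times)^{r+1}$ with $r+1$ even, every element of $\BF_r^\times$ is a square in $\BF_{r^2}$, so $r\CO_K$ splits in $K(\sqrt{d})$. The paper instead observes that $\Gal(K(\sqrt{d})/\BQ)\cong(\BZ/2)^2$ has no cyclic subgroup of order $4$, so the decomposition group at $r$ (which would be cyclic of order $4$ if $r\CO_K$ stayed inert) must collapse, forcing splitting. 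Both are fine; the paper's version is a touch slicker but yours is more hands-on.

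For $\phi(r\CO_K)=-r$, the paper simply notes that $-r$ is a square modulo $\fq$ (which is equivalent, via the reciprocity computation you wrote out, to $\left(\tfrac{r}{q}\right)=-1$) and quotes the explicit description of $\phi$ at the start of \S 2 of \cite{BG}, which says $\phi((\alpha))$ is the unique generator of $(\alpha)$ that is a square modulo $\fq$. Your route instead factors $\phi$ as $\alpha\mapsto\alpha\cdot\eta(\alpha)$ and \emph{derives} that $\eta$ restricted to rational residues is the Legendre symbol, rather than citing it. Your intrinsic derivation is appealing, but the load-bearing assertion $\phi^{c_K}=\ov{\phi}$ is itself a nontrivial fact about the CM type of $B=\Res_{H/K}A$; as written it is stated without proof or a precise reference, and one should be careful that the $K/\BQ$-descent isomorphism $B^{c_K}\cong B$ is $T$-semilinear with respect to complex conjugation on $T$ and not some other automorphism. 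The paper sidesteps this by leaning directly on the explicit Buhler--Gross formula, which is the cleaner citation to make here; if you prefer the intrinsic route, you should supply a reference (e.g.\ \cite{Gross78} or \cite{BG}) for $\phi^{c_K}=\ov{\phi}$, after which the reciprocity computation $\left(\tfrac{r}{q}\right)=\left(\tfrac{q}{r}\right)=-1$ indeed finishes the argument exactly as you say.
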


\begin{proof} Let $\fr = r\CO_K$. Noting that $-r$ is a square modulo $\fq$, the explicit formula for $\phi$ given at the beginning of \S2 of \cite{BG} shows that
$\phi(\fr) = - r$. On the other hand, since $r$ is inert in $K$, and the Galois group of $K(\sqrt{d})/\BQ$ is not cyclic, the prime $\fr$ of $K$ must split in $K(\sqrt{d})$,
and so we must have $\chi_d(\fr) = 1$. Hence $\phi_d(\fr) = -r$, as required.
\end{proof}

\noindent We first show that Theorem \ref{AR-nonzero} holds for $R = 1$.

\begin{prop}\label{4.4n} Assume that $q \equiv 7 \mod 16$. Then, for all primes $\fQ$ of $HT$ above $\fP$, we have $\ord_\fQ(L(\ov{\phi}, 1)/\Omega_\infty) = -1$.
\end{prop}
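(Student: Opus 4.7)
The plan is to compute $\ord_{\fP}(L(\ov{\phi}, 1)/\Omega_\infty)$ by feeding the limit formula of Proposition \ref{special-val} into the Iwasawa-theoretic machinery of the companion paper \cite{CL}. Applied with $R = 1$ (so that $H_1 = H$), Proposition \ref{special-val} gives
\begin{equation*}
L(\ov{\phi}, 1)/\Omega_\infty \;=\; \lim_{n \to \infty} \sum_{\fa \in \fC_n} \xi(\fa)\,\Psi_{\fa, 1},
\end{equation*}
with $\Psi_{\fa, 1} = \mathrm{Tr}_{K(\fq)/H}\bigl(\tfrac{1}{\sqrt{-q}}\, \CE^*_1(\xi(\fa)\Omega_\infty/\sqrt{-q},\, \CL_\fa)\bigr)$ and $\fC_n$ a set of class group representatives with $\phi(\fa) \equiv 1 \bmod \fP^{n+2}$. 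Note that the integrality statement of Proposition \ref{inte-2} is not available here, since its proof requires at least one prime divisor of $R$ in order to set up the relative Lubin--Tate structure; so we cannot conclude integrality at $\fP$ directly.

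The right-hand side above is the value at the trivial character of the $\fP$-adic $L$-function of $\ov{\phi}$ built in \cite{CL} from these Eisenstein traces. I would then invoke the main result of \cite{CL}, which identifies this $\fP$-adic value as a $\fP$-adic unit $\fu$ times the missing Euler factor at $\fp^*$:
\begin{equation*}
L(\ov{\phi}, 1)/\Omega_\infty \;=\; \bigl(1 - \phi(\fp^*)/2\bigr)\cdot \fu, \qquad \fu \in HT, \quad \ord_{\fQ}(\fu) = 0,
\end{equation*}
for every prime $\fQ$ of $HT$ above $\fP$. A short computation, using Corollary \ref{4.2n} (which gives $\ord_{\fP}(\phi(\fp^*) - 1) = 1$) together with $\ord_{\fP}(2) = 1$ (since $\fP$ is of degree one over $\fp$ and $2\CO_K = \fp\fp^*$), then yields
\begin{equation*}
\ord_{\fP}\bigl(1 - \phi(\fp^*)/2\bigr) \;=\; \ord_{\fP}\bigl(2 - \phi(\fp^*)\bigr) - \ord_{\fP}(2) \;=\; 0 - 1 \;=\; -1,
\end{equation*}
because $2 - \phi(\fp^*) = 1 - (\phi(\fp^*) - 1)$ is a $\fP$-adic unit. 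Combining, $\ord_{\fQ}(L(\ov{\phi}, 1)/\Omega_\infty) = -1$ for every such $\fQ$, noting that $HT/T$ is unramified at $\fP$ because $H/K$ is unramified everywhere.

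The main obstacle is thus the identification of $\fu$ as a $\fP$-adic unit. This is precisely the point at which the hypothesis $q \equiv 7 \mod 16$, rather than merely $q \equiv 7 \mod 8$, is forced: by Lemma \ref{4.1}, the stronger congruence is equivalent to $\fp^*$ being inert in $F = K(B_{\fP^2})$, which is the condition that drives the vanishing of the $\fP^\infty$-Selmer group of $B/K$ established in \cite{CL}; that vanishing, together with a main-conjecture-type identification of the $\fP$-adic $L$-function with the characteristic ideal of the relevant Iwasawa module, is what converts nonvanishing into the exact statement $\ord_\fQ(\fu) = 0$. Without this structural input, one can at best hope to extract the nonvanishing of $L(\ov{\phi}, 1)$, not the precise valuation $-1$ needed as the base case of Zhao's induction in Theorem \ref{AR-nonzero}.
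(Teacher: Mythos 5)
Your proof follows essentially the same route as the paper's: invoke the main conjecture from \cite{CL} (valid here because $q\equiv 7\bmod 16$ makes the measure $\mu_A$ a unit) to conclude that $L(\ov{\phi},1)/\Omega_\infty$ times a suitable Euler factor is a $\fP$-adic unit, and then compute that factor's valuation from Corollary \ref{4.2n} together with the relation $\phi(\fp)\phi(\fp^*)=N\fp$. One small caveat: the interpolation factor you write, $1-\phi(\fp^*)/2$ (with $\ord_\fP=-1$, sitting as a multiplicand of the unit), is not literally the factor $1-\phi(\fp)/N\fp$ (with $\ord_\fP=+1$, multiplying $L/\Omega_\infty$ to give a unit) appearing in equation (7.20) of \cite{CL}; the two differ by a $\fP$-adic unit and both yield $\ord_\fQ(L(\ov{\phi},1)/\Omega_\infty)=-1$, but if you intend to quote (7.20) verbatim you should match its form.
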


\begin{proof} The proof makes essential use of the so called "main conjecture" for $B$ over the field $F_\infty = K(B_{\fP^\infty})$, which is given
by Theorem 7.13 of \cite{CL}. Put $\Gamma = \Gal(F_\infty/F)$. Let $\msi$ be the ring of integers of the completion of the maximal unramified extension of $K_\fp$,
and write $\Lambda_\msi(\Gamma)$ for the Iwasawa algebra of $\Gamma$ with coefficients in $\msi$. Then, as is explained in the proof of Corollary 7.15 of \cite{CL},
the full force of the main conjecture tells us that the measure $\mu_A$ appearing in Theorem 7.13 of \cite{CL} is a unit in $\Lambda_\msi(\Gamma)$ when
$q \equiv 7 \mod 16$. Hence the integral of any continuous homomorphism from $\Gamma$ to $\msi^\times$ against this measure must be a unit in $\msi$.
Thus, recalling that the $\fp$-adic period $\Omega_\fp(A)$ is a unit in $\msi$,  it follows from equation (7.20) of \cite{CL} that
$$
\Omega_\infty^{-1}L(\ov{\phi}, 1)(1- \phi(\fp)/N\fp)
$$
will be a unit in $\msi$, and thus a unit at $\fQ$. But, noting that $\phi(\fp)\phi(\fp^*) = N\fp$, the conclusion of the proposition follows immediately
from the first assertion of Corollary \ref{4.2n}.
\end{proof}

\begin{lem}\label{4.5} For each $R \in \fR$, the extension $J_R/K$ defined by \eqref{6} is unramified at the primes of $K$ lying above 2. \end{lem}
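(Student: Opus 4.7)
The plan is to reduce the claim to the well-known fact that for a prime $r \equiv 1 \mod 4$, the extension $\BQ(\sqrt{r})/\BQ$ is unramified at $2$, and then push this through to the compositum and to the base change over $K$.

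More precisely, I would first recall from the proof of Lemma~\ref{3.1} (or verify directly) that for each $i = 1, \dots, k$, the quadratic character $\chi_{r_i}$ cutting out $K(\sqrt{r_i})/K$ has conductor $r_i\CO_K$, which is coprime to $2\CO_K = \fp\fp^{*}$. Equivalently, $K(\sqrt{r_i})/K$ is the base change to $K$ of $\BQ(\sqrt{r_i})/\BQ$, and the latter has discriminant $r_i$ because $r_i \equiv 1 \mod 4$; in particular it is unramified at $2$, so $K(\sqrt{r_i})/K$ is unramified at every prime of $K$ above $2$.

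Next, since $J_R$ is the compositum of the fields $K(\sqrt{r_i})$ for $i = 1, \dots, k$, and since ramification indices in a compositum of abelian extensions are bounded by the $\mathrm{lcm}$ of the ramification indices of the factors, $J_R/K$ is unramified at each prime of $K$ lying above $2$. (Equivalently, one may write $J_R = K \cdot \BQ(\sqrt{r_1}, \dots, \sqrt{r_k})$ and note that the real subfield $\BQ(\sqrt{r_1}, \dots, \sqrt{r_k})/\BQ$ is unramified at $2$ as a compositum of extensions each unramified at $2$, so its base change to $K$ is unramified at the primes of $K$ above $2$.) This completes the proof.

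There is essentially no obstacle: the statement is a direct consequence of the congruence conditions $r_i \equiv 1 \mod 4$ that are built into the definition of the set $\fR$, together with the elementary behaviour of conductors of quadratic extensions and of compositums of abelian extensions.
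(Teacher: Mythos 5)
Your proof is correct and follows essentially the same route as the paper: reduce to showing that each $K(\sqrt{r_i})/K$ is unramified above $2$, using the congruence $r_i \equiv 1 \bmod 4$ built into the definition of $\fR$, and then pass to the compositum. The paper verifies the single-prime case slightly more directly, by exhibiting the integral generator $m=(\sqrt{r}-1)/2$ with minimal polynomial $V(X)=X^2+X-(r-1)/4$ and observing that $V'(m)=2m-1$ is a unit at $\fp$ and $\fp^*$, which is just the local form of your discriminant computation for $\BQ(\sqrt{r})/\BQ$.
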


\begin{proof} It suffices to show that, for each prime $r$ dividing $R$, the extension $K(\sqrt{r})/K$ is unramified at the primes above 2. Put $m = (\sqrt{r}-1)/2$,
so that $V(m) = 0$, where $V(X) = X^2 + X - (r-1)/4$. But then $V'(m) = 2m-1$ is a unit at $\fp$ and $\fp^*$, and so $K(m) = K(\sqrt{r})$ is unramified at the primes of $K$ above 2.
\end{proof}

\noindent

For each positive divisor $d$ of $R$, we define
\begin{equation}\label{4.6}
\msl(d) = \sqrt{d}L(\ov{\phi}_d, 1)/\Omega_\infty, \, \, \msl = \msl(1).
\end{equation}
Proposition \ref{3.3n} shows that $\msl(d)$ always belongs to the field $HT$. However, the following stronger assertion is essential for our proof. Note
that by Rohrlich's theorem \cite{Ro1}, we have $\msl \neq 0$ for all primes $q \equiv 7 \mod 8$.
\begin{thm}\label{4.7} Assume $R \in \fR$, and let $d$ be any positive divisor of $R$. Then $\msl(d)/\msl$ belongs to the field $T$.
\end{thm}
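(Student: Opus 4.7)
The plan is to prove $\msl(d)/\msl \in T$ by showing that the ratio is fixed by the action of $\Gal(HT/T)$. Since both $\msl(d)$ and $\msl$ already lie in $HT$ (as noted immediately before the theorem), such invariance forces the ratio into $T$. First I would verify $H\cap T = K$: the Hilbert class field $H/K$ is unramified everywhere of degree $h$, while $T/K$ is ramified at $\fq$, so $T$ contains no nontrivial unramified subextension of $K$. Restriction then yields an isomorphism $\Gal(HT/T)\cong \fG = \Gal(H/K)$.

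The essential step is to establish that, for every $\sigma\in\Gal(HT/T)$ with $\sigma|_H = \gamma_\fc$, one has $\sigma(\msl(d)) = \lambda_\sigma\,\msl(d)$, where $\lambda_\sigma$ is a scalar depending only on $\fc$ and in particular not on $d$. I would extract this by applying Proposition \ref{3.3n} with $R=d$ (valid because primes dividing $d$ lie in the conductor $d\fq$ of $\phi_d$, so $L_d(\ov\phi_d,\gamma_\fa,s) = L(\ov\phi_d,\gamma_\fa,s)$), summing over a set $\{\fa\}$ of ideal-class representatives of $\fG$, and using the cancellation $\xi_d(\fa)/\phi_d(\fa) = \xi(\fa)/\phi(\fa)$, to obtain
\[\msl(d) = \frac{1}{d\sqrt{-q}} \sum_\fa \frac{\xi(\fa)}{\phi(\fa)}\, \textrm{Tr}_{K(d\fq)/H}\!\left(\CE^*_1\!\left(\frac{\xi_d(\fa)\Omega_\infty}{d\sqrt{-qd}},\, \frac{\CL_\fa}{\sqrt{d}}\right)\right).\]

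Applying $\sigma$ term by term requires the following inputs: $\phi(\fa)\in T$ is fixed; the cocycle identity for $\xi$ gives $\sigma(\xi(\fa)) = \xi(\fc\fa)/\xi(\fc)$; abelianness of $K(d\fq)/K$ lets $\sigma$ commute with $\textrm{Tr}_{K(d\fq)/H}$ through its lift $\tilde\sigma = \gamma_\fc \in \Gal(K(d\fq)/K)$; CM reciprocity (as in Gross \cite{Gross82}) describes how $\tilde\sigma$ transforms the Eisenstein value on $A^{(d),\fa}$ into the corresponding value on $A^{(d),\fc\fa}$, effectively shifting $\Omega_\infty$ by the period factor $\xi(\fc)/\phi(\fc)$; and finally $\tilde\sigma(\sqrt{d}) = \chi_d(\fc)\sqrt{d}$. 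The crucial observation is that the $\chi_d(\fc)$ factors introduced by the various occurrences of $\sqrt{d}$ — inside $\sqrt{-qd}$, inside the lattice $\CL_\fa/\sqrt{d}$, and implicitly in $\xi_d(\fa) = \xi(\fa)/\chi_d(\fa)$ — collect, together with the parity identity $\CE^*_1(-z,\fL) = -\CE^*_1(z,\fL)$, to a trivial net factor $\chi_d(\fc)^2 = 1$. After reindexing the sum by $\fa\mapsto \fc^{-1}\fa$, which is a bijection on ideal-class representatives, the result is $\sigma(\msl(d)) = \lambda_\sigma\,\msl(d)$, with $\lambda_\sigma$ expressible in terms of $\xi(\fc)$, $\phi(\fc)$, and the Shimura period constant, and hence independent of $d$.

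The identical computation with $d=1$ (where $\chi_d$ is trivial) yields $\sigma(\msl) = \lambda_\sigma\,\msl$ with the same $\lambda_\sigma$. Therefore $\sigma(\msl(d)/\msl) = \msl(d)/\msl$ for every $\sigma\in\Gal(HT/T)$, so $\msl(d)/\msl\in T$ as desired. The hard part will be the clean $d$-independence of $\lambda_\sigma$: a careful accounting of the four occurrences of $\sqrt{d}$ (and the associated $\chi_d(\fc)$ contributions) under $\tilde\sigma$, together with a precise invocation of CM reciprocity for the action on the Eisenstein value at a $d\fq$-division point of $A^{(d),\fa}$. It is exactly this $d$-independence that guarantees the Galois twist factor is shared by $\msl(d)$ and $\msl$, and without it the ratio would only lie in $HT$, not in $T$.
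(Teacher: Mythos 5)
Your overall strategy is exactly the paper's: observe that $\msl(d)$ and $\msl$ lie in $HT$, that restriction gives $\Gal(HT/T)\cong\Gal(H/K)$ (via $H\cap T=K$), and that both quantities transform under $\Gal(HT/T)$ by the \emph{same} $d$-independent cocycle, so the ratio is Galois-fixed and hence lies in $T$. The difference is entirely in how the transformation law is obtained. The paper defines $e_\fa=\phi(\fa)/\xi(\fa)$, notes (citing Corollary 4.11 of Goldstein--Schappacher) that $e_\fa$ depends only on the ideal class $\sigma$ of $\fa$, and then invokes Proposition~11.1 of Buhler--Gross \cite{BG}, which asserts outright that $\tau(\msl(d))=e_{\tau_H}\,\msl(d)$ for every $\tau\in\Gal(HT/T)$; since $e_{\tau_H}$ manifestly does not involve $d$, the theorem is immediate.

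You instead attempt to rederive this transformation law from Proposition~\ref{3.3n} by tracking the cocycle relation for $\xi$, the abelianness of $K(d\fq)/K$, a CM-reciprocity description of the Galois action on the Eisenstein value, and the several occurrences of $\sqrt{d}$. This is plausible — it is in effect the content of the Buhler--Gross proposition — but you do not actually carry it out, and you explicitly flag the $d$-independence bookkeeping as ``the hard part.'' That bookkeeping is precisely the substance of the theorem; as written the key step is asserted rather than proved. Two further points to tighten if you pursue the direct route: (i) your displayed formula needs to keep track of whether you are using the imprimitive $L_R$ or the complete $L(\ov\phi_d,1)$ — with $R$ replaced by $d$ the two coincide because $d\fq$ is the exact conductor of $\phi_d$, which you note, but the formula as displayed does not quite match Proposition~\ref{3.3n} specialized to $R=d$; (ii) the ``Shimura period constant'' you invoke is not a defined object in this paper, and the precise statement you need is exactly what \cite{BG}, Prop.\ 11.1 supplies. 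In short: same route, but where the paper disposes of the crucial step with a one-line citation, your write-up leaves that step open.
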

\begin{proof} For each integral ideal $\fa$ of $K$, which is prime to $R\fq$, we define
$$
e_\fa = \phi(\fa)/\xi(\fa).
$$
Then, as is already shown in \cite{GS} (see Corollary 4.11), $e_\fa$ only depends on the ideal class of $\fa$, and so, writing $\sigma$ for the Artin symbol of $\fa$
in $\fG = \Gal(H/K)$, we put $e_\sigma = e_\fa$. Now let $d$ be any positive divisor of $R$, so that $\msl(d)$ belongs to the field $HT$. Recall also that $\Gal(HT/T)$
is isomorphic $\Gal(H/K)$ under restriction, because $H \cap T = K$. If $\tau$ is any element of $\Gal(HT/T)$, we write $\tau_H$ for its restriction to $H$. Then it is proven in
Proposition 11.1 of  \cite{BG} that, for every $\tau$ in $\Gal(HT/T)$, we have
$$
\tau(\msl(d)) = e_{\tau_H} \msl(d).
$$
Since the factor $e_{\tau_H}$ is independent of $d$, it follows that $\msl(d)/\msl$ must belong to $T$, and the proof is complete.
\end{proof}
We now give the proof of Theorem \ref{AR-nonzero} using induction on the number $k$ of prime factors of $R$. Assume first that $k=1$, so that $R = r$, a prime number.
 By Theorem \ref{key-identity} and Proposition \ref{inte-2} for the the prime $r$, we conclude that
\begin{equation}\label{4.8}
\msl(r)/\sqrt{r}+ (1-\ov{\phi}((r))/r^2)\msl = 2V_r,
\end{equation}
where  $V_r = \lim_{n\ra \infty}\sum_{\fa\in\fC_n} \xi(\fa)\Psi_{\fa, r}$. Let $\fW$ be any prime of $HT(\sqrt{r})$ lying above the prime $\fP$ of $T$. By Proposition \ref{inte-2}, we have  $\ord_\fW(V_r) \geq 0.$ Further, by Lemma 4.3, we have $(1-\ov{\phi}((r))/r^2) = (1 + 1/r)$. Thus, since $r+1 \equiv 2 \mod 4$, it follows from Proposition \ref{4.4n} that $\ord_\fW((1-\ov{\phi}((r))/r^2)\msl) = 0.$ As $\ord_\fW(V_r) \geq 0$, we conclude from \eqref{4.8} that $\ord_\fW(\msl(r)/\sqrt{r}) = 0$, and so Theorem \ref{AR-nonzero} holds when   $k=1$.

\medskip

A curious new aspect of the argument now arises when $R = r_1\ldots r_k$ with $k \geq 2$, and we must appeal to Theorem \ref{4.7} to get around it. By Theorem \ref{key-identity}, we have
\begin{equation}\label{4.9}
\msl(R)/\sqrt{R} + \sum_{d|R, d \neq 1, R}\Lambda(d, R)/\sqrt{d} + \msl \prod_{i=1}^{k}(1- \ov{\phi}((r_i))/r_i^2) = 2^kV_R,
\end{equation}
where $V_R =  \lim_{n\ra \infty}\sum_{\fa\in\fC_n} \xi(\fa)\Psi_{\fa, R}$, and
$$
\Lambda(d, R) = \msl(d) \prod_{r|R/d}(1-\ov{\phi}_d((r))/r^2).
$$
The problem is that the terms $\Lambda(d, R)$ lie in an extension of $HT$ where the prime $\fP$ of $T$ is unramified but will usually have a large residue class field extension, and
this means one cannot carry through the inductive argument in its most naive form. The key to overcoming this difficulty is to divide both side of \eqref{4.9} by the non-zero number $\msl$. Doing this, and defining, for each positive integer divisor $d$ of $R$, $\Phi(d, R) = \Lambda(d, R)/\msl$, we obtain the equation
\begin{equation}\label{4.10}
\Phi(R)/\sqrt{R} + \sum_{d|R, d \neq 1, R}\Phi(d, R)/\sqrt{d}  + \prod_{i=1}^{k}(1- \ov{\phi}((r_i))/r_i^2) = 2^kV_R/\msl,
\end{equation}
where $\Phi(R) = \msl(R)/\msl$. Let $H_R$ be the field defined in \eqref{6}, and we now take $\fW$ to be any prime of the compositum $H_RT$ lying above $\fP$, so that $\fW/\fP$ is unramified. By Proposition \ref{inte-2}, we have  $\ord_\fW(V_R) \geq 0.$ Thus we conclude from Proposition \ref{4.4n} that $\ord_\fW(2^kV_R/\msl) \geq k+1$. Thanks to  Lemma \ref{4.3}, we have
\begin{equation} \label{4.11}
\ord_\fW(\prod_{i=1}^{k}(1- \ov{\phi}((r_i))/r_i^2)) = k.
\end{equation}
On the other hand, our inductive hypothesis, together with Lemma \ref{4.3} and Proposition \ref{4.4n},  shows  that, for each positive divisor $d$ of $R$, with $d \neq 1, R$, we have
\begin{equation}\label{4.12}
\ord_\fW(\Phi(d, R)/\sqrt{d}) = k.
\end{equation}
Of course, these estimates alone do not allow us to conclude from \eqref{4.10} that $\ord_\fW(\Phi(R)/\sqrt{R}) = k.$ However, the argument is saved by Theorem \ref{4.7},
which tells us that, for every positive divisor $d$ of $R$, $\Phi(d, R)$ belongs to the field $T$, and so it lies in the completion $T_\fP$ at $\fP$. Since $\fP$ has its residue
field of order 2, this means that we can write, for every positive divisor $d \neq 1, R$ of $R$,
\begin{equation}\label{4.13}
\Phi(d, R)/\sqrt{d} = \sqrt{d}\pi_\fP^k(1 + \pi_\fP b_d),
\end{equation}
where $\pi_\fP$ is a local parameter at $\fP$, and $\ord_\fP(b_d) \geq 0$. Thus
\begin{equation}\label{4.14}
\sum_{d|R, d \neq 1, R}\Phi(d, R)/\sqrt{d}  \equiv \pi_\fP^kD_R \mod \fW^{k+1},
\end{equation}
with  $D_R = \sum_{d|R. d \neq 1, R}\sqrt{d}$. But
$$
D_R^2 \equiv  \sum_{d|R. d \neq 1, R} d \mod \fW,
$$
and $\sum_{d|R. d \neq 1, R} d \equiv 2^k \mod 2$, whence $\ord_\fW(D_R) \geq 1$. Thus we have finally shown that
$$
ord_\fW(\sum_{d|R, d \neq 1, R}\Phi(d, R)/\sqrt{d} ) \geq k+1.
$$
It now follows from \eqref{4.10} and \eqref{4.11} that $\ord_\fW(\Phi(R) = k$.  Thus, again using Proposition \ref{4.4n}, we
have finally proven Theorem \ref{AR-nonzero} by induction on the number of prime factors of $R \in \fR$.

\medskip

We end with a numerical example. Take $q = 23$. Then $K = \BQ(\sqrt{-23})$ has class number $h=3$. The Hilbert class field is $H = K(\alpha)$,
where $\alpha$ satisfies the equation $\alpha^3 - \alpha - 1 = 0.$  The following  global minimal Weierstrass equation for $A/H$ is given by Gross \cite{Gross78}
$$
y^2 + \alpha^3xy + (\alpha + 2)y = x^3 + 2x^2 - (12\alpha^2 + 27\alpha + 16)x - (73\alpha^2 + 99\alpha +62).
$$
Then $\fR$ will consist of all square free positive integers $R$ such that every prime factor $r$ of $R$ satisfies $r \equiv 1 \mod 4$ and
$r$ is congruent to one of $5, 7, 10, 11, 14, 15, 17, 19, 20, 21, 22 \mod 23$. Then Theorem \ref{main} shows that, for all $R \in \fR$, we have $L(A^{(R)}/H, 1) \neq 0.$ However,
we thank A. Dabrowski for pointing out the following interesting numerical example to us. Let $\beta = \sqrt{-23}$, and define $J$ to be the elliptic curve defined
over $H$, which is the twist of $A/H$ by the quadratic extension $H(\sqrt{-\beta})/H$. Theorem 1.3 of \cite{CL} in the case $q = 23$ asserts that
  $L(J/H, 1) \neq 0$. Now take $R= 901 = 17\times53$,  so that $R \in \fR$. Let $J^{(901)}/H$ be the twist of $J/H$ by the quadratic extension
$H(\sqrt{901})/H$. Then Dabrowski's calculations show that  $L(J^{(901)}/H, 1) = 0$. Thus the obvious analogue of Theorem \ref{main} does not hold for the curve $J/H$.

\medskip

\noindent John Coates,\\
Emmanuel College, Cambridge,\\
England.\\
{\it jhc13@dpmms.cam.ac.uk }

\medskip

\noindent Yongxiong Li,\\
Yau Mathematical Sciences Center,\\
Tsinghua University, \\
Beijing, China.\\
{\it liyx\_1029@tsinghua.edu.cn}


\begin{thebibliography}{10}

\bibitem{BG}J. Buhler, B. Gross, {\em  Arithmetic on elliptic curves with complex multiplication. II},  Invent. Math. 79 (1985), 11-29.

\bibitem{JC} J. Choi, {\em Iwasawa $\mu$-invariants and elliptic curves with complex multiplication}, Ph.D Thesis, POSTECH, 2018.


\bibitem{C1}J. Coates, {\em Lectures on the Birch-Swinnerton-Dyer conjecture}, Notices of the ICCM 1 (2013), 29-46.


\bibitem{CLTZ}J. Coates, Y. Li, Y. Tian and S. Zhai, {\em  Quadratic twists of elliptic curves.},  Proc. Lond. Math. Soc. (3) 110 (2015), 357-394.

\bibitem{CL} J. Coates, Y. Li, {\em Non-vanishing theorems for central $L$-values of some elliptic curves with complex multiplication}, to appear.


\bibitem{CKLT} J. Coates, Y. Kezuka,  Y. Li and Y. Tian, {\em On the Birch-Swinnerton-Dyer Conjecture for certain elliptic curves with complex multiplication}, (in preparation).

\bibitem{DS85} E. de Shalit, {\em Relative Lubin-Tate groups}, Proc. Amer. Math. Soc. 95 (1985), 1-4.


\bibitem{GS} C. Goldstein, N. Schappacher, {\em Series d'Eisenstein et fonctions L de courbes elliptiques a multiplication complexe}, J. Reine Angew. Math. 327 (1981), 184-218


\bibitem{Gross78} B. Gross, {\em  Arithmetic on elliptic curves with complex multiplication}, Lecture Notes in Mathematics, 776. Springer, Berlin, 1980.

\bibitem{Gross82} B. Gross, {\em  Minimal models for elliptic curves with complex multiplication}, Compositio Math. 45 (1982), 155-164.


\bibitem{LR} K. Li, Y. Ren,  {\em On the quadratic twists of Gross curves}, Jour. Sichuan Normal University 39 (2016), 37-46.


\bibitem{Ro1} D. Rohrlich, {\em The non-vanishing of certain Hecke L-functions at the center of the critical strip},   Duke Math. J. 47 (1980), 223-232.






\bibitem{Zhao1} C. Zhao, {\em A criterion for elliptic curves with lowest 2-power in $L(1)$.}    Math. Proc. Cambridge Philos. Soc. 121 (1997), no. 3, 385-400.

\bibitem{Zhao2} C. Zhao, {\em A criterion for elliptic curves with second lowest 2-power in $L(1)$.}   Math. Proc. Cambridge Philos. Soc. 131 (2001), no. 3, 385-404.

\end{thebibliography}
\end{document}